\newtheorem{theorem}{Theorem}[section]
\newtheorem{proposition}[theorem]{Proposition}
\newtheorem{lemma}[theorem]{Lemma}
\newtheorem{corollary}[theorem]{Corollary}
\newtheoremstyle{defis}%
    {3pt}
    {3pt}
    {}
    {}
    {\scshape}
    {.}
    {.5em}
    {}
\theoremstyle{defis}
\newtheorem{definition}[theorem]{Definition}
\newtheorem{example}[theorem]{Example}
\newtheorem{remark}[theorem]{Remark}
\definecolor{Noir}{rgb}{0,0,0} 
\definecolor{Blanc}{rgb}{1,1,1} 
\definecolor{Gray}{rgb}{0.5,0.5,0.5} 
\definecolor{Rouge}{rgb}{0.8,0.1,0.1} 
\definecolor{DBleu}{RGB}{51,51,178} 
\definecolor{LBleu}{rgb}{0.85,0.85,1} 
\definecolor{Orange}{RGB}{255,140,0} 
\newcommand{\bcent}{\begin{center}} 
\newcommand{\ecent}{\end{center}} 
\newcommand{\benum}{\begin{enumerate}} 
\newcommand{\eenum}{\end{enumerate}} 
\newcommand{\bitem}{\begin{itemize}} 
\newcommand{\eitem}{\end{itemize}} 
\newcommand{\btab}{\begin{tabular}} 
\newcommand{\etab}{\end{tabular}} 
\newcommand{\beqn}{\begin{eqnarray}} 
\newcommand{\eeqn}{\end{eqnarray}} 
\newcommand{\bmath}{\begin{math}} 
\newcommand{\emath}{\end{math}} 
\newcommand{\noin}{\noindent} 
\providecommand{\F}[1]{\mathbb{#1}}
\newcommand{\ZZ}{\mathbb{Z}}
\newcommand{\QQ}{\F{Q}} 
\newcommand{\RR}{\F{R}} 
\newcommand{\CC}{\F{C}}
\newcommand{\PP}{\F P}
\newcommand{\calC}{\mathcal{C}}
\newcommand{\calF}{\mathcal{F}}
\newcommand{\calS}{\mathcal{S}}
\newcommand{\calT}{\mathcal{T}}
\newcommand{\calG}{\mathcal{G}}
\newcommand{\calX}{\mathcal{X}}
\newcommand{\calZ}{\mathcal{Z}}
\renewcommand{\epsilon}{\varepsilon}
\newenvironment{manualtheorem}[1]{%
  \manualtheoreminner
}{\endmanualtheoreminner}
\newcommand\varleq{\mathbin{\vcenter{\hbox{%
  \oalign{\hfil$\scriptstyle<$\hfil\cr 
          \noalign{\kern-.3ex} 
          $\scriptscriptstyle({-})$\cr}%
}}}} 
\renewcommand\subsetneq{\mathbin{\vcenter{\hbox{%
  \oalign{\hfil$\scriptstyle\subset$\hfil\cr 
          \noalign{\kern-.3ex} 
          $\scriptscriptstyle({-})$\cr}%
}}}} 
\author{Evan Sundbo}
\email{evan.sundbo@mail.utoronto.ca}
\address{Department of Mathematics, University of Toronto\\ Toronto ON, Canada}
\title[Broken Toric Varieties and Cell-Compatible Sheaves]{Broken Toric Varieties and Cell-Compatible Sheaves}
\date{\today}
\subjclass[2010]{14F06, 14M25}
\begin{document}

\begin{abstract}
We study the cohomology of broken toric varieties via the derived push-forward of the constant sheaf to a complex of polytopes, proving a Deligne-type decomposition theorem, degeneration of the associated Leray-Serre spectral sequence, and showing that the Leray filtration on their cohomology is equal to twice the weight filtration. Furthermore, we give an explicit formula for the Betti numbers of some broken toric varieties whose associated complex of polytopes is the $n$-skeleton of a higher dimensional polytope, encompassing some important examples.

\end{abstract}
\maketitle

\tableofcontents

\section{Introduction}\label{1}

Broken toric varieties are a class of varieties, essentially combinatorial in nature, which appear in areas such as mirror symmetry, hypertoric geometry, toric topology, and the study of Hitchin systems. To be precise, a \emph{broken toric variety} (or \emph{stable toric variety}) is a union of (smooth projective) toric varieties glued to each other pairwise along $T$-invariant toric subvarieties. They are combinatorial in the sense that their geometry is entirely captured by a complex of polytopes, which we accordingly call their \emph{polytope complex}, in the same way that the information of a toric variety is studied by looking at its associated polytope or fan.

There is a map from a broken toric variety to its polytope complex, viewed alternatively as the quotient map by the compact part of the torus action, or as the moment map for that action. By pushing forward the constant sheaf on the broken toric variety along this map, we obtain a complex of sheaves on the polytope complex with nice combinatorial properties (its cohomology objects are \emph{cell-compatible}), which make them amenable to computation.

Broken toric varieties can be thought of degenerations of toric varieties (a nice overview of this point of view can be found in \cite{A15}) and have previously attracted a bit of attention. After their introduction in \cite{A02}, the structure of their moduli space is investigated in \cite{AB06} and \cite{AM16}.  The main component of the moduli space is compactified in \cite{O12} and tropicalized in \cite{MW?}. 

The organization of the article is as follows: In Section \ref{2}, we state the relevant definitions, give some examples, and prove that the higher derived pushforwards of the constant sheaf on a broken toric variety along its quotient map $f$ to its polytope complex are all cell-compatible subsheaves of a constant sheaf. Our main result appears in Section \ref{3}, which is that the derived pushforward $Rf_*\underline{\QQ}_X$ is a formal complex and so the Leray spectral sequence degenerates. Examples of non-smooth varieties for which this classical version of the Decomposition Theorem holds are fairly rare.

\begin{manualtheorem}{3.4}[]
For $X$ an $n$-dimensional broken toric variety and $f$ the map from $X$ to its polytope complex $P_\bullet$, there is an isomorphism in $\mathcal{D}^b_c(P_\bullet)$
$$Rf_*\underline{\QQ}_X \cong \bigoplus_{i=0}^{2n}R^if_*\underline{\QQ}_X[-i].$$
This implies the degeneration at the $E_2$ page of the Leray spectral sequence $E^{pq}_2 = H^p(P_\bullet, R^qf_*\underline{\QQ}_X)$ associated to $f$.
\end{manualtheorem}

In Section \ref{4} we show that the Leray filtration associated to $f$ on the cohomology of a broken toric variety is equal to twice the weight filtration.

\begin{manualtheorem}{4.2}[]
For $X$ a broken toric variety, $$W_{2k}H^i(X,\underline{\QQ}_X) = W_{2k+1}H^i(X,\underline{\QQ}_X) = L_{k}H^i(X,\underline{\QQ}_X).$$
\end{manualtheorem}

The final two sections are together devoted to the study of skeletal polytope complexes, i.e. those polytope complexes which are the $n$-skeleton of a higher dimensional polytope. Section \ref{5} studies cell-compatible sheaves on this higher dimensional polytope, culminating in a new way to derive the Betti numbers of a smooth projective toric variety (Corollary \ref{toricbetti}).  Finally, in Section \ref{6} we explain how this relates to the Betti numbers of broken toric varieties with skeletal polytope complexes. In particular this includes many polytope complexes which are polytopal decompositions of an $n$-sphere.

As alluded to above, broken toric varieties make appearances in many areas.  Fine compactified Jacobians of nodal curves with all rational components are broken toric, as studied in \cite{OS79}. This immediately points to a relationship with the moduli space of Higgs bundles or Hitchin system (see \cite{H87, S92, R18, J21} and the many references within), the fibres of which above nodal spectral curves are precisely their fine compactified Jacobians \cite{BNR89}.  Relatedly, the special fibre of a hypertoric Hitchin system \cite{HP06, DMS19, GM20} is also broken toric of the same form. In particular, they are broken toric with polytope complexes which are polytopal decompositions of an $n$-torus, and whose $n-1$ cells can be interpreted as a periodic arrangement of hyperplanes in $\RR^n$.

In a different direction, broken toric varieties whose polytope complex is a polytopal decomposition of an $n$-sphere are of interest to the Gross-Siebert program \cite{GW00, GS06, KX16} in mirror symmetry.  Roughly speaking, one wants to study mirror symmetry for families of Calabi-Yaus $\calX \to S$ by looking at certain maximally singular fibres, the so-called large complex structure limits. These are in general broken toric varieties (with additional data), and it is conjectured that their polytope complexes are polytopal decompositions of an $n$-sphere.  This is trivially true for elliptic curves and proven for K3 surfaces in \cite{GW00}. 

One can find broken toric varieties when studying complexity $k$ $T$-varieties \cite{AIPSV12}, where the fibres of the natural quotient map, after resolving indeterminacies, are broken toric \cite{I?}.  They also appear in the field of toric topology \cite{BR08, BP15, BBC20}.  For example, the characteristic function of a polytope as defined in \cite{DJ91} describes the stalks of our cell-compatible sheaves (see the proof of Proposition \ref{cc}). In this area one also studies polyhedral products \cite{BBCG09}, topological spaces determined by a simplicial complex and a family of pairs of pointed topological spaces. Polyhedral products are a generalization of the moment angle complexes of \cite{DJ91, BP00} and their specializations find applications in many areas (see in particular the table in Chapter $1$ of \cite{BBC20}). It turns out that broken toric varieties also fit into this formalism, which we explain in Appendix \ref{A}. 
\\

\noin\emph{Acknowledgements.} The author would like to thank Tony Bahri, Yael Karshon, and Michael McBreen for interesting discussions regarding this work.  Moreover, the input of Michael Groechenig was absolutely indispensable, and this paper would not exist without him. The input of the anonymous reviewer has also proven most useful. This work was supported by the FAST Doctoral Scholarship at the University of Toronto.\\

\section{Definitions and Derived Pushforwards}\label{2}

For us, a toric variety will always mean a smooth projective toric variety over $\CC$, and $T$ will be the maximal compact subgroup of the algebraic torus acting on it. A polytope is the intersection of a finite number of half-spaces in some $\RR^n$, and is simple if exactly $n$ codimension $1$ faces meet at each vertex.  It is a theorem of Delzant \cite{D88} that there is a bijective correspondence between symplectic toric varieties and simple polytopes for which  the edges at each vertex form a $\ZZ$-basis of $\ZZ^n$. We further recall that any smooth projective toric variety is symplectic since one can pull back the Fubini-Study form from the ambient projective space. The action of $T$ provides an effective Hamiltonian group action, so a symplectic toric variety is equipped with a symplectic moment map $f:X\to \RR^n$, the image of which is a polytope in $\RR^n$ \cite{GS82}. Indeed, this is the polytope which appears in Delzant's correspondence. Since we are interested in the topological properties of such objects, we can ignore the Delzant condition on the vertices.

\begin{definition} A \emph{broken toric variety} is a union of smooth projective toric varieties glued to each other pairwise along $T$-invariant toric subvarieties. The \emph{polytope complex} $P_\bullet(X)$ of a broken toric variety $X$ is the union of the polytopes of the components of $X$, glued to each other so that if two components of X meet along a toric variety $X'$, then the polytopes of those components intersect as the polytope of $X'$.
\end{definition}

There is a natural map $f:X\to P_\bullet(X)$ given by gluing together the moment maps of the components of $X$ to their polytopes.

\begin{remark}
To any complex of polytopes $P_\bullet$ there is a not only one, but rather a family of broken toric varieties which have $P_\bullet$ as their associated polytope complex. Given $P_\bullet$, a broken toric variety $X$ with components $X_i$ is fixed by choosing gluing isomorphisms $\alpha_{ij}$ from a toric subvariety in $X_i$ to an isomorphic subvariety in $X_j$ (satisfying the cocycle condition $\alpha_{ij}\alpha_{jk} = \alpha_{ik}$). This is equivalent to the choice of an element of $H^1(P_\bullet,T^n_\CC)$, that is, to the choice of a $T^n$-torsor on $P_\bullet$. We further note that this is not necessarily a bijection of $H^1(P_\bullet,T^n_\CC)$ with the space of isomorphism classes of broken toric varieties over $P_\bullet$; for example, if $P_\bullet$ is the graph in Example \ref{ex1}, $H^1(P_\bullet,T^n_\CC)$ is one-dimensional but all broken toric varieties over it are isomorphic.
\end{remark}

\begin{lemma}\label{quotient}
Any $n$-dimensional broken toric variety $X$ is a quotient of the total space of a $T^{n}$-torsor on $P_\bullet$.
\end{lemma}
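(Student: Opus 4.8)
The plan is to construct the $T^n$-torsor and the quotient map directly, component by component, from the Delzant-type description of the moment map of an honest toric variety, and then glue. First I would record the local model: for a smooth projective toric variety $X_i$ with moment map $f_i : X_i \to P_i$, there is a canonical homeomorphism over $P_i$
$$X_i \;\cong\; (P_i \times T^n)/\!\sim_i,$$
where $(p,t)\sim_i(p,t')$ precisely when $t^{-1}t'$ lies in the subtorus $T_{F(p)}\subseteq T^n$ attached to the smallest face $F(p)$ of $P_i$ through $p$ (the subtorus spanned by the inward primitive normals of the facets containing $F(p)$), and under this identification $f_i$ is the projection onto $P_i$. Over $\mathrm{relint}(P_i)$ this is the trivial principal $T^n$-bundle, and the collapsing occurs only along the boundary faces. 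This is the topological shadow of Delzant's construction together with the local normal form of a toric moment map, and I would either cite it or give a short derivation via symplectic reduction.

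Next I would globalize the torus factor into a torsor. As recalled in the remark above, fixing the broken toric variety $X$ over $P_\bullet$ amounts to choosing gluing isomorphisms $\alpha_{ij}$ between $T$-invariant toric subvarieties $Z_{ij}\subseteq X_i$ and $Z_{ji}\subseteq X_j$ satisfying the cocycle condition, and this data is equivalent to a $T^n$-torsor $\pi : E \to P_\bullet$. Concretely, writing $P_\bullet=\bigcup_i P_i$ with $P_i\cap P_j$ a common face and transporting $\alpha_{ij}$ through the models above, $T$-equivariance forces $\alpha_{ij}$ to be fibrewise translation by a map $g_{ij}:P_i\cap P_j\to T^n$ (after choosing a lift where the relevant collapsed subtorus is nontrivial), and $\alpha_{ij}\alpha_{jk}=\alpha_{ik}$ makes $\{g_{ij}\}$ a \v{C}ech $1$-cocycle; gluing the pieces $P_i\times T^n$ along $(p,t)\mapsto(p,g_{ij}(p)t)$ yields the torsor $E$, with $\pi$ the projection.

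Finally I would check that the fibrewise collapsings descend. The crucial observation is that the subtorus collapsed over a point $p$ depends only on the cell of $P_\bullet$ containing $p$ and not on the chart $P_i$: since each $\alpha_{ij}$ is an isomorphism of toric varieties identifying $Z_{ij}$ with $Z_{ji}$ $T$-equivariantly, its torus part $g_{ij}$ preserves the face stratification of $P_i\cap P_j$ and intertwines $T_{F(p)}$ computed in $P_i$ with $T_{F(p)}$ computed in $P_j$. Hence the relations $\sim_i$ assemble into one equivalence relation $\sim$ on $E$, and unwinding the construction, $E/\!\sim$ is exactly $\bigsqcup_i X_i$ glued along the $\alpha_{ij}$, namely $X$, with $\pi$ descending to $f$. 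This exhibits $X$ as a quotient of the total space of the $T^n$-torsor $E$, as claimed.

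The main obstacle is the local model of the first step — pinning down the moment map of a smooth projective toric variety as the collapsing quotient $(P_i\times T^n)/\!\sim_i$ — and, relatedly, the verification in the last step that the gluing isomorphisms are compatible enough with this description for the collapsing to glue; this is exactly where the hypotheses that the components are joined along $T$-invariant toric subvarieties by toric isomorphisms are used. The \v{C}ech bookkeeping in the middle step is routine.
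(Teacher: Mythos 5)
Your proposal is correct and follows essentially the same route as the paper: the local model of a smooth projective toric variety as the face-collapsing quotient of $P_i\times T^n$ (this is exactly Lemma 1.4 of Davis--Januszkiewicz, which the paper cites), globalized by gluing these models along the $T^n$-torsor determined by the gluing isomorphisms $\alpha_{ij}$ and checking the collapsing relation descends. The only cosmetic difference is that you make the \v{C}ech bookkeeping and the equivariance/descent check explicit, whereas the paper treats the contractible case first and then invokes local triviality of the torsor near the $(n-1)$-cells.
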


\begin{proof}
By Lemma 1.4 of \cite{DJ91}, any smooth projective toric variety $X$ with polytope $P$ can be written as a quotient of $P\times T^n$.  The proof in fact applies verbatim to broken toric varieties whose polytope complexes $P_\bullet$ are contractible.  The idea is to inductively blow up the singular strata of $X$ and obtain a $T^n$-manifold $\hat{X}$ whose orbit space is identifed with $P_\bullet$, as well as a natural "collapse" map $\hat{X}\to X$. Since $P_\bullet$ contractible, $\hat{X}$ is diffeomorphic to $P_\bullet\times T^n$.

For the general case, let $\calT$ be the total space of $T^n$-torsor defining how the toric components of $X$ are glued together. Locally around any $n-1$ cell of $P_\bullet$, $\calT$ is the trivial torsor, so we can define a map from this local neighbourhood to a neighbourhood of $P_\bullet$ by using the above construction.  The result is a quotient map from $\calT$ to a broken toric variety over $P\bullet$ whose components are glued as prescribed by the $T^n$-torsor defining $X$ over $P_\bullet$, which is necessarily isomorphic to $X$.
\end{proof}

\begin{remark}
The proof of Lemma 1.4 of \cite{DJ91} depends on the existence of a smooth $T^n$ action on the toric variety, and as such does not hold for singular toric varieties. This is the main reason for our insistence that the toric components of a broken toric variety be smooth.  Our upcoming decomposition result (Theorem \ref{ff}) depends on Lemma \ref{quotient}, and so does not hold for the case of singular components. It's even worse, in fact: an explicit example is given in \cite{McC89} of two singular toric varieties with the same polytope but different rational Betti numbers.   It would be interesting to investigate a version of the decomposition theorem for broken toric varieties with singular components using intersection cohomology. 
\end{remark}

\begin{definition}
Let $X$ be a topological space with a CW complex structure $\mathcal{E} = \{\text{Sk}_i\}_{i=0}^{\text{dim}X}$ and denote by $\mathring{\text{Sk}}_i$ the complement of $\text{Sk}_{i-1}$ in $\text{Sk}_i$. A constructible sheaf $\mathcal{F}$ on $X$ is \emph{cell-compatible} with respect to $\mathcal{E}$ if for all $\alpha\in\mathring{\text{Sk}}_i$, $\calF|_{\alpha}$ is constant on $\alpha$ and if for all $x\in\alpha\in\mathring{\text{Sk}}_i$, the stalk $\calF_x$ lies in the intersection $\bigcap_{y\in\beta\in\mathring{\text{Sk}}_{i+1}}\calF_y$, where the intersection runs over all $\beta\in\mathring{\text{Sk}}_{i+1}$ for which $\alpha\in\partial\beta$.
\end{definition}

For the technically minded reader, the intersection in this definition can be understood in the following way. Let $x\in\alpha\in\mathring{\text{Sk}}_i$, $y_1\in\beta_1\in\mathring{\text{Sk}}_{i+1}$, and $y_2\in\beta_2\in\mathring{\text{Sk}}_{i+1}$ with $\alpha\in\partial\beta_1\cap \partial\beta_2$. Let $\gamma_1$ be a morphism from $x$ to $y_1$ in the category of exit paths \cite{M?, T09} of $\calF$ (and similarly for $\gamma_2$).  We say that $\calF_x\subseteq \calF_{y_1}\cap\calF_{y_2}$ if the constructible sheaves $\gamma_1^{-1}\calF$ and $\gamma_2^{-1}\calF$ have constant subsheaves $\calG_1$ and $\calG_2$ respectively such that $(\calG_1)_x = \calF_x = (\calG_2)_x$.

The following proposition shows how our two previous definitions are related; indeed, our main examples of cell-compatible sheaves come from broken toric varieties.

\begin{proposition}\label{cc} Let $X$ be an $n$-dimensional broken toric variety and $\calT\xrightarrow[]{\pi} P_\bullet$ be the the total space of the $T^n$-torsor on $P_\bullet$ describing the gluing data of its toric components. The higher derived pushforwards $R^if_*\underline{\QQ}_X$ of $\underline{\QQ}_X$ along $f$ are subsheaves of $R^i\pi_*\calT$ which are cell-compatible with respect to the CW complex structure on $P_\bullet$.
\end{proposition}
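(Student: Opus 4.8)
The plan is to realise each $R^if_*\underline{\QQ}_X$ as a subsheaf of the constant sheaf $R^i\pi_*\underline{\QQ}_{\calT}$ by pulling cohomology back along the quotient map of Lemma~\ref{quotient}, and then to read cell-compatibility off the explicit toric description of the fibres of $f$ over the relative interiors of the cells of $P_\bullet$.

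First I would set up the comparison morphism. By Lemma~\ref{quotient} there is a quotient map $q\colon\calT\to X$ with $\pi=f\circ q$, and $X$, $\calT$, $P_\bullet$ are all compact, so $f$, $q$, $\pi$ are proper. Since $\calT\to P_\bullet$ is a principal $T^n$-bundle and $T^n$ acts trivially on $H^*(T^n;\QQ)$, the sheaf $R^i\pi_*\underline{\QQ}_{\calT}$ is the constant sheaf $\underline{H^i(T^n;\QQ)}$. The adjunction unit $\underline{\QQ}_X\to Rq_*\underline{\QQ}_{\calT}$, pushed forward along $f$ and composed with $Rf_*Rq_*=R\pi_*$, is a morphism $Rf_*\underline{\QQ}_X\to R\pi_*\underline{\QQ}_{\calT}$ in $\mathcal D^b_c(P_\bullet)$; taking $i$-th cohomology sheaves yields $\varphi_i\colon R^if_*\underline{\QQ}_X\to R^i\pi_*\underline{\QQ}_{\calT}$.

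Next I would prove $\varphi_i$ is a monomorphism, which can be checked on stalks. Fix $p$ in the relative interior $\mathring{\sigma}$ of a $d$-cell $\sigma$ of $P_\bullet$. Proper base change identifies the stalk of $R^if_*\underline{\QQ}_X$ at $p$ with $H^i(f^{-1}(p);\QQ)$ and that of $R^i\pi_*\underline{\QQ}_{\calT}$ with $H^i(\pi^{-1}(p);\QQ)$, the stalk of $\varphi_i$ being pullback along the restriction $q|\colon\pi^{-1}(p)\to f^{-1}(p)$. Near the orbit $O_\sigma$ the components of $X$ are smooth toric varieties, so $\pi^{-1}(p)$ is the torus fibre $T^n$ and $f^{-1}(p)$ is the torus orbit over $p$, canonically $T^n/T_\sigma$ where $T_\sigma\subseteq T^n$ is the stabilizer subtorus (connected, by smoothness of the components); by the construction of the collapse map in Lemma~1.4 of \cite{DJ91}, extended in Lemma~\ref{quotient}, $q|$ is precisely the quotient homomorphism $T^n\to T^n/T_\sigma$. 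A closed connected subtorus of $T^n$ is a direct factor — its defining sublattice of $\ZZ^n$ is saturated — so this surjection splits and $q|^*\colon H^i(T^n/T_\sigma;\QQ)\to H^i(T^n;\QQ)$ is injective. Hence $\varphi_i$ is injective on every stalk, and $R^if_*\underline{\QQ}_X$ is a subsheaf of $R^i\pi_*\underline{\QQ}_{\calT}$.

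Finally I would verify cell-compatibility. Over $\mathring{\sigma}$ the map $f$ is the trivial bundle $O_\sigma\cong\mathring{\sigma}\times T^d$, so $R^if_*\underline{\QQ}_X$ is constant there, equal to $\underline{H^i(T^d;\QQ)}$; in particular it is constructible for the cell structure. For the intersection condition, set $\mathfrak t=\mathrm{Lie}(T^n)$ and $\mathfrak t_\sigma=\mathrm{Lie}(T_\sigma)$; the identification above writes the stalk of $R^if_*\underline{\QQ}_X$ over $\mathring{\sigma}$ as $\bigwedge^i\mathrm{Ann}(\mathfrak t_\sigma)\subseteq\bigwedge^i\mathfrak t^{\ast}=H^i(T^n;\QQ)$, and, since $\varphi_i$ is a map of sheaves into a constant sheaf, the generization maps toward nearby higher cells are exactly the inclusions these embeddings induce. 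If $\tau$ is a $(d+1)$-cell with $\sigma\subseteq\partial\tau$, then $T_\tau\subseteq T_\sigma$ has codimension $1$ — passing from $\sigma$ up to $\tau$ drops exactly one of the facet normals cutting out $\sigma$ — so $\mathrm{Ann}(\mathfrak t_\sigma)\subseteq\mathrm{Ann}(\mathfrak t_\tau)$ and the stalk at $\sigma$ is contained in the stalk at $\tau$ inside $H^i(T^n;\QQ)$; intersecting over all such $\tau$ is precisely the defining condition of cell-compatibility. (The subspaces $\mathfrak t_\sigma$ are spanned by the primitive normals of the facets of $P_\bullet$ containing $\sigma$, i.e.\ by the characteristic function of \cite{DJ91}, which is what makes these stalks explicitly computable.) The step I expect to be the main obstacle is this stalk analysis of $\varphi_i$: pinning down, via proper base change and the explicit collapse map, that it is pullback along $T^n\to T^n/T_\sigma$, and checking that $f^{-1}(\mathring{\sigma})$ really is a trivial torus bundle even when $\sigma$ is a face shared by several components — the latter reducing to the classical single-toric-variety situation.
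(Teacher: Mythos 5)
Your proposal is correct and follows essentially the same route as the paper: the comparison map is obtained by the same adjunction $\underline{\QQ}_X\to Rq_*\underline{\QQ}_{\calT}$ pushed down to $P_\bullet$, injectivity is checked on stalks via proper base change and the surjection $T^n\twoheadrightarrow T^n/T_\sigma$ inducing an injection on rational cohomology, and cell-compatibility is read off the identification of the stalk over a $d$-cell with $\bigwedge^i$ of a $d$-dimensional subspace (your $\mathrm{Ann}(\mathfrak t_\sigma)$ is the paper's span $\langle d^\alpha_1,\dots,d^\alpha_k\rangle$ of the cell's directions), nested along incidences of cells. The only cosmetic difference is that you make explicit the splitting of the subtorus and the constancy of $R^i\pi_*\underline{\QQ}_{\calT}$, which the paper leaves implicit.
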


\begin{proof}

For cell-compatibility, Let us start with a toric variety $X$ with polytope $P$. Choose a basis $\{e_1,\dots,e_m\}$ of the ambient real vector space in which $P$ lies.  Then for all $k$ and $\alpha\in\mathring{\text{Sk}}_k(P)$, choose a basis $\{d^\alpha_1,\dots,d^\alpha_k\}$ of the sub-vector space defined by translating $\alpha$ to intersect $0$, with each $d^\alpha_i$ written in terms of the chosen basis vectors $e_i$. The fibre $f^{-1}(x)$ over a point $x\in\alpha\in\mathring{\text{Sk}}_k(P)$ can be described as $\langle d^\alpha_1,\dots,d^\alpha_k \rangle/\Gamma$, where $\Gamma$ is the standard lattice.

Now proper base change allows us to describe the stalks of $R^if_*\underline{\QQ}_X$ as \begin{equation}\label{stalkdescr}
(R^if_*\underline{\QQ}_X)_x = H^i(f^{-1}(x),\underline{\QQ}_{f^{-1}(x)}) = \bigwedge\nolimits^i \langle d^\alpha_1,\dots,d^\alpha_k \rangle
\end{equation}
from which cell-compatibility follows. For the broken case one only has to note that identifying the stalks of two cell-compatible sheaves along a cell, corresponding to the gluing of toric components, preserves cell-compatibility.

To show that $R^if_*\underline{\QQ}_X$ is a subsheaf of $R^i\pi_*\calT$, consider the commutative diagram
\[
\begin{tikzcd}
   \calT \arrow{rr}{r} \arrow[swap]{dr}{\pi} & & X \arrow{dl}{f} \\[10pt]
    & P_\bullet
\end{tikzcd}
\]
where $\pi$ is the projection onto $P_\bullet$ and $r$ is the quotient map whose existence is assured by Proposition \ref{quotient}.

Denote the sheaf complex $Rr_*\underline{\QQ}_{\calT}$ by $\calG^\bullet$.  There is a natural map of (complexes of) sheaves on $X$ given by $\alpha:\underline{\QQ}_X\to\calG^\bullet$, obtained by adjunction from the identity map $r^{-1}\underline{\QQ}_X\to \underline{\QQ}_{\calT}$. This map descends to a map of complexes on $P_\bullet$, $R\alpha: Rf_*\underline{\QQ}_X\to Rf_*\calG^\bullet$ and further to a map on the cohomology sheaves $$R^i\alpha:R^if_*\underline{\QQ}_X\to R^if_*\calG^\bullet = R^i\pi_*\calT,$$ which we will show is injective by checking injectivity on the stalks.

For $y\in\mathring{\text{Sk}}_j$, $(R^i\alpha)_y:\bigwedge\nolimits^i\QQ^j\to\bigwedge\nolimits^i\QQ^{n}$, and we note that $r|_{\pi^{-1}(y)}:T^n\twoheadrightarrow T^j$, which implies that $(R^i\alpha)_y$ is injective. Since this holds for all $0\leq j\leq n$, $R^i\alpha$ is injective.
\end{proof}

Now we have the pleasure of showing some examples. 

\begin{example}\label{ex1}
First consider the case of a necklace of three copies of the projective line, with $0$ in one glued to $\infty$ in the next. The polytope complex of this broken toric variety is a triangulation of $S^1$ with three $0$-cells which we can embed in $\RR^2$ with the standard basis $\{e_1,e_2\}$. In the following picture we have labelled some of the stalks of $R^1f_*\underline{\QQ}_X$.

\begin{center}
\begin{tikzpicture}
\node (a) at (0,0){$X$};
\draw (3,-0.5) -- (4,-0.5) -- (3,0.5) -- cycle;
\node (b) at (3,-0.5){$\bullet$};
\node (c) at (4,-0.5){$\bullet$};
\node (e) at (3,0.5){$\bullet$};
\draw[->] (a) -- (2.8,0) node[pos=0.5,above
]{$f$};

\node (f) at (5.5,0.75){$\langle e_2 \rangle$};
\node (g) at (5.5,0){$\langle e_1-e_2 \rangle$};
\node (h) at (5.5,-0.75){$0$};

\draw[thick, -, dotted] (f) -- (3,0.2);
\draw[thick, -, dotted] (g) -- (3.5,0);
\draw[thick, -, dotted] (h) -- (4,-0.5);

\end{tikzpicture}
\end{center}
\end{example}

\begin{example}\label{ex2}
For a higher-dimensional case, consider two copies of $\PP^2$ glued to each other along a copy of $\PP^1$.  In this case the polytope complex of $X$ is two triangles glued to each other along one edge, which we can again embed in $\RR^2$. As above, we have labelled some of the stalks of $R^1f_*\underline{\QQ}_X$.

\begin{center}
\begin{tikzpicture}
\node (a) at (0,0){$X$};
\fill[gray!40!white] (3,-1) rectangle (5,1);
\draw (3,-1) -- (5,-1) -- (5,1) -- (3,1) -- cycle;
\draw (3,1) -- (5,-1);
\node (b) at (3,-1){$\bullet$};
\node (c) at (5,-1){$\bullet$};
\node (d) at (5,1){$\bullet$};
\node (e) at (3,1){$\bullet$};
\draw[->] (a) -- (2.8,0) node[pos=0.5,above
]{$f$};

\node (f) at (6.5,2){$\langle e_1 \rangle$};
\node (g) at (6.5,1.25){0};
\node (h) at (6.5,0.5){$\langle e_1,e_2 \rangle$};
\node (i) at (6.5,-0.25){$\langle e_1 - e_2 \rangle$};
\node (j) at (6.5,-1){$\langle e_2 \rangle$};

\draw[thick, -, dotted] (f) -- (4,1);
\draw[thick, -, dotted] (g) -- (5,1);
\draw[thick, -, dotted] (h) -- (4.5,0.5);
\draw[thick, -, dotted] (i) -- (4.25,-0.25);
\draw[thick, -, dotted] (j) -- (5,-0.75);
\end{tikzpicture}
\end{center}

\end{example}

\section{Decomposition and Vanishing Results}\label{3}

In this section we establish some of our key tools, all of which apply to any broken toric variety. This first lemma is not surprising and finds uses throughout the article.

\begin{lemma}\label{openclosed} For $P$ be a CW complex let us denote by $\iota_k$ the inclusion of $\mathring{\text{Sk}}_k$ into $\text{Sk}_{k}$ and by $\kappa_k$ the inclusion of $\text{Sk}_k$ into ${\text{Sk}_{k+1}}$. For $\calF$ a cell-compatible sheaf on $P$ and $0\leq k \leq n$, there is a short exact sequence
\begin{equation}\label{restrictiontoskeletongeneral}
0\to {\iota_k}_!{\iota_k}^{-1}\calF\to {\kappa_k}_!\kappa_k^{-1}\calF \to (\kappa_{k-1})_!(\kappa_{k-1})^{-1}\calF\to 0.
\end{equation}
Moreover, when $\calF = R^if_*\underline{\QQ}_X$ for a broken toric variety $X$, one finds ${\iota_k}_!{\iota_k}^{-1} \calF\cong {\iota_k}_!\bigwedge\nolimits^i\underline{\QQ}_{\mathring{\text{Sk}}_{k}(P_\bullet)}^{k}$.
\end{lemma}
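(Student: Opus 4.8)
The statement has two parts, and I would treat them in turn. For the short exact sequence \eqref{restrictiontoskeletongeneral} the plan is simply to invoke the open--closed (recollement) decomposition; in fact nothing about $\calF$ is used here beyond its being a sheaf, and the cell-compatibility hypothesis is recorded only because that is the generality relevant later. Inside $\text{Sk}_k$ the subcomplex $\text{Sk}_{k-1}$ is closed, with open complement $\mathring{\text{Sk}}_k$, so for $\iota_k\colon\mathring{\text{Sk}}_k\hookrightarrow\text{Sk}_k$ and $\kappa_{k-1}\colon\text{Sk}_{k-1}\hookrightarrow\text{Sk}_k$ there is an exact sequence
$$0\to{\iota_k}_!{\iota_k}^{-1}\bigl(\calF|_{\text{Sk}_k}\bigr)\to\calF|_{\text{Sk}_k}\to{\kappa_{k-1}}_*{\kappa_{k-1}}^{-1}\bigl(\calF|_{\text{Sk}_k}\bigr)\to0,$$
built from the counit ${\iota_k}_!{\iota_k}^{-1}\to\mathrm{id}$ and the unit $\mathrm{id}\to{\kappa_{k-1}}_*{\kappa_{k-1}}^{-1}$; exactness is checked on stalks, where at a point of $\mathring{\text{Sk}}_k$ the sequence reads $0\to\calF_x\to\calF_x\to0\to0$ and at a point of $\text{Sk}_{k-1}$ it reads $0\to0\to\calF_x\to\calF_x\to0$.

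To put this into the stated form I would push the whole sequence forward along the closed embedding $\kappa_k\colon\text{Sk}_k\hookrightarrow\text{Sk}_{k+1}$ (or, if one prefers to work on $P_\bullet$, along the closed embedding $\text{Sk}_k\hookrightarrow P_\bullet$). Pushforward along a closed embedding is exact, hence preserves the short exact sequence; moreover $(-)_!$ is compatible with composition of (locally closed) immersions, and ${\kappa_k}_*={\kappa_k}_!$ since $\kappa_k$ is closed, so the three terms become ${\kappa_k}_!{\iota_k}_!{\iota_k}^{-1}\calF$, ${\kappa_k}_!{\kappa_k}^{-1}\calF$, and ${\kappa_k}_!{\kappa_{k-1}}_!{\kappa_{k-1}}^{-1}\calF=({\kappa_k}{\kappa_{k-1}})_!({\kappa_k}{\kappa_{k-1}})^{-1}\calF$. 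This is exactly \eqref{restrictiontoskeletongeneral} once the outer ${\kappa_k}_!$ is suppressed from the first and third terms, which is the notational convention in force.

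For the final assertion, take $\calF=R^if_*\underline{\QQ}_X$. The space $\mathring{\text{Sk}}_k(P_\bullet)$ is the disjoint union of its open $k$-cells $\alpha$, each of which (the relative interior of a polytope) is connected. By Proposition \ref{cc} the sheaf $\calF$ is cell-compatible, so $\calF|_\alpha$ is constant, and the stalk description \eqref{stalkdescr} obtained there identifies its stalk with $\bigwedge\nolimits^i\langle d^\alpha_1,\dots,d^\alpha_k\rangle$, a $\QQ$-vector space of dimension $\binom{k}{i}$. Hence on each component $\calF|_\alpha\cong\bigwedge\nolimits^i\underline{\QQ}_\alpha^{k}$, and assembling these isomorphisms over all $k$-cells gives ${\iota_k}^{-1}\calF\cong\bigwedge\nolimits^i\underline{\QQ}_{\mathring{\text{Sk}}_k(P_\bullet)}^{k}$; applying ${\iota_k}_!$ yields the claim. (The local trivializations depend on the chosen bases $d^\alpha_\bullet$, so the isomorphism is not canonical, but only an isomorphism is asserted.)

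I do not expect a genuine obstacle here: both parts are formal consequences of recollement together with the stalk computation already carried out for Proposition \ref{cc}. The one place needing attention is the bookkeeping in the second paragraph --- keeping straight which inclusions are open, closed, or merely locally closed, and verifying that the three pushed-forward terms are identified compatibly, so that the sequence written down really is the image of the recollement sequence and not some unrelated complex.
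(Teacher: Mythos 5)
Your proposal is correct and follows essentially the same route as the paper: the short exact sequence is the standard open--closed decomposition for the closed subspace $\text{Sk}_{k-1}$ and its open complement $\mathring{\text{Sk}}_k$ inside $\text{Sk}_k$ (pushed forward, as you note, under the paper's notational convention of regarding everything as extended by zero), and the second assertion follows from cell-compatibility together with the stalk computation \eqref{stalkdescr} from Proposition \ref{cc}. Your treatment is in fact slightly more careful than the paper's on two points --- the bookkeeping of which space the three terms live on, and the use of constancy on each open cell rather than just the stalk computation --- but the underlying argument is identical.
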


\begin{proof}
Recall that for a sheaf $\calF$ on a topological space $X$ with $i:Z\hookrightarrow X$ a closed subspace and $j:U\hookrightarrow X$ its complementary open subspace, there is an exact sequence of sheaves
$$0\to j_!j^{-1}\calF \to \calF \to i_*i^{-1}\calF\to 0.$$
The sequence in the statement of the lemma nothing more than this sequence for the complementary open and closed subspaces, in this case to the subspaces $\text{Sk}_{k-1}$ and $\mathring{\text{Sk}}_k$ of ${\text{Sk}_k}$.

To show that  ${\iota_k}_!{\iota_k}^{-1} \calF|_{\text{Sk}_{k}}\cong {\iota_k}_!\bigwedge\nolimits^i\underline{\QQ}_{\mathring{\text{Sk}}_{k}(P_\bullet)}^{k}$, note that ${\iota_k}^{-1} \calF|_{\text{Sk}_{k}}$ is a sheaf on $\mathring{\text{Sk}}_k$ (a union of open $k$-cells) whose stalks are $\bigwedge\nolimits^i\QQ^k$ by proper base change. Hence it is isomorphic to $\bigwedge\nolimits^i\underline{\QQ}_{\mathring{\text{Sk}}_{k}(P_\bullet)}^{k}$.
\end{proof}

The following lemma describes a general vanishing result.

 \begin{lemma}\label{j<ivanishing}
For $X$ a broken toric variety, $H^j(P_\bullet,R^if_*\underline{\QQ}_X) = 0$ for all $j<i$.
\end{lemma}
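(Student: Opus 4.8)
The plan is to run a descending induction along the skeletal filtration $\text{Sk}_0\subseteq\text{Sk}_1\subseteq\cdots\subseteq\text{Sk}_n=P_\bullet$, peeling off one dimension at a time with the open–closed short exact sequence of Lemma~\ref{openclosed} and the compactly supported cohomology of open balls. Write $\calF:=R^if_*\underline{\QQ}_X$. We may assume $1\le i\le n$: for $i=0$ there is nothing to prove, and for $i>n$ the sheaf $\calF$ vanishes altogether, since by proper base change its stalks are the cohomology groups of the fibres of $f$, which are tori of real dimension at most $n$. By Proposition~\ref{cc} the sheaf $\calF$ is cell-compatible, and by Lemma~\ref{openclosed} its pullback to the union of open $k$-cells satisfies $\iota_k^{-1}\calF\cong\bigwedge\nolimits^i\underline{\QQ}^k_{\mathring{\text{Sk}}_k}$. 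Since $\bigwedge\nolimits^i\QQ^k=0$ for $k<i$, the stalks of $\calF$ vanish on every cell of $\text{Sk}_{i-1}$, hence $\calF|_{\text{Sk}_{i-1}}=0$. The plan is to establish, for every $k$, that the restriction map $H^j(\text{Sk}_k,\calF|_{\text{Sk}_k})\to H^j(\text{Sk}_{k-1},\calF|_{\text{Sk}_{k-1}})$ is injective whenever $j<k$, and then to iterate this down from $k=n$ to $k=i$.

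The heart of the argument is the following vanishing. Fix $k$ with $1\le k\le n$. The skeleton $\text{Sk}_k$ is compact (a finite union of faces of polytopes), so for the open inclusion $\iota_k\colon\mathring{\text{Sk}}_k\hookrightarrow\text{Sk}_k$ one has $H^j(\text{Sk}_k,\iota_{k!}\iota_k^{-1}\calF)\cong H^j_c(\mathring{\text{Sk}}_k,\iota_k^{-1}\calF)$. Now $\mathring{\text{Sk}}_k$ is a \emph{disjoint} union of open $k$-cells — the relative interiors of the $k$-dimensional faces of $P_\bullet$, whose boundaries have been removed into $\text{Sk}_{k-1}$ — each of which is homeomorphic to $\RR^k$, and $\iota_k^{-1}\calF$ is constant of rank $\binom{k}{i}$ on each. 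Consequently $H^j(\text{Sk}_k,\iota_{k!}\iota_k^{-1}\calF)$ is a direct sum of copies of $H^j_c(\RR^k,\QQ)$, which is zero unless $j=k$; in particular it vanishes for every $j<k$.

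Feeding this into the long exact sequence of the open–closed short exact sequence of Lemma~\ref{openclosed} on $\text{Sk}_k$, which relates $\iota_{k!}\iota_k^{-1}\calF$, the sheaf $\calF|_{\text{Sk}_k}$ itself, and the extension by zero (equivalently, the pushforward along the closed immersion $\text{Sk}_{k-1}\hookrightarrow\text{Sk}_k$) of $\calF|_{\text{Sk}_{k-1}}$, whose cohomology on $\text{Sk}_k$ is just $H^j(\text{Sk}_{k-1},\calF|_{\text{Sk}_{k-1}})$, the vanishing of $H^j(\text{Sk}_k,\iota_{k!}\iota_k^{-1}\calF)$ for $j<k$ forces the restriction $H^j(\text{Sk}_k,\calF|_{\text{Sk}_k})\hookrightarrow H^j(\text{Sk}_{k-1},\calF|_{\text{Sk}_{k-1}})$ to be injective for $j<k$. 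Finally, fix $j<i$ and compose these injections for $k=n,n-1,\dots,i$ — each step being legitimate because $j<i\le k$ — to obtain an injection $H^j(P_\bullet,\calF)\hookrightarrow H^j(\text{Sk}_{i-1},\calF|_{\text{Sk}_{i-1}})$. The target is $H^j(\text{Sk}_{i-1},0)=0$ by the first paragraph, so $H^j(P_\bullet,R^if_*\underline{\QQ}_X)=0$ for all $j<i$.

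The only point demanding real care is the second paragraph: identifying $H^j(\text{Sk}_k,\iota_{k!}\iota_k^{-1}\calF)$ with $H^j_c(\mathring{\text{Sk}}_k,\iota_k^{-1}\calF)$, which rests on compactness of the skeleton together with the standard compatibility of extension by zero along an open immersion with compactly supported cohomology, and the combinatorial observation that in a polytope complex the relative interiors of the $k$-dimensional faces form a disjoint union of open $k$-balls. Everything else is a formal chase through the open–closed sequence, and no separate base case is required: the iteration terminates at $\text{Sk}_{i-1}$, where $\calF$ already restricts to zero.
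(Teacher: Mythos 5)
Your proof is correct and is essentially the paper's argument: both rest on Lemma \ref{openclosed}, the identification $\iota_k^{-1}\calF\cong\bigwedge\nolimits^i\underline{\QQ}^k_{\mathring{\text{Sk}}_k}$, and the fact that the cohomology of the extension by zero from the open $k$-cells is concentrated in degree $k$. The only difference is bookkeeping: you chain injections $H^j(\text{Sk}_k,\calF|_{\text{Sk}_k})\hookrightarrow H^j(\text{Sk}_{k-1},\calF|_{\text{Sk}_{k-1}})$ downward until the sheaf vanishes on $\text{Sk}_{i-1}$, whereas the paper runs the same long exact sequences as an ascending induction on $k$ propagating the vanishing in degrees $j<i$ up to $\text{Sk}_n=P_\bullet$.
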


\begin{proof}
The proof is by induction on $k$ using Lemma \ref{openclosed}. When $k<i$ the sheaf ${\kappa_k}_!{\kappa_k}^{-1}R^if_*\underline{\QQ}_X$ is zero, and for $k=i$ we find that ${\kappa_k}_!{\kappa_k}^{-1}R^if_*\underline{\QQ}_X
\cong {\iota_k}_!\underline{\QQ}_{\mathring{\text{Sk}}_k(P_\bullet)}$.
Thus
\begin{equation*}
H^j(P_\bullet, {\kappa_k}_!{\kappa_k}^{-1}R^if_*\underline{\QQ}_X) = \begin{cases}
\QQ^{|\mathring{\text{Sk}}_i(P_\bullet)|} , &j=i\\
0, &\text{otherwise}
\end{cases}
\end{equation*}
We now invoke the long exact sequence in cohomology associated to \eqref{restrictiontoskeletongeneral}:
\begin{equation}
\cdots\to H^j({\iota_k}_!\underline{\QQ}_{\mathring{\text{Sk}}_{k}(P_\bullet)}^{\binom{k}{i}})\to H^j({\kappa_k}_!{\kappa_k}^{-1}R^if_*\underline{\QQ}_X) \to H^j((\kappa_{k-1})_!(\kappa_{k-1})^{-1}R^if_*\underline{\QQ}_X) \to \cdots.
\end{equation}
Since the cohomology of $ {\iota_k}_!\underline{\QQ}_{\mathring{\text{Sk}}_{k}(P_\bullet)}$ vanishes in all degrees except $k$, and by inductive hypothesis the cohomology of $(\kappa_{k-1})_!(\kappa_{k-1})^{-1}R^if_*\underline{\QQ}_X$ vanishes in all degrees less than $i$, this sequence shows that $H^j(P_\bullet,{\kappa_k}_!{\kappa_k}^{-1}R^if_*\underline{\QQ}_X) =0$ for $j<i$.  The result follows by noticing that ${\kappa_n}_!{\kappa_n}^{-1}R^if_*\underline{\QQ}_X \cong R^if_*{\QQ}_X$.

\end{proof}

Before we move on to Theorem \ref{ff} we will describe a certain endomorphism which is needed for our proof.

\begin{lemma}\label{end}
Let $X$ be a broken toric variety with polytope complex $P_\bullet$ which is the quotient of a $T^n$-torsor on $P_\bullet$ whose non-torsion part is trivial. Then there exists a positive integer $N$ and an endomorphism $[N]$ of $Rf_*\underline{\QQ}_{X}$ whose induced endomorphism on $\text{Ext}^{\,j}(\calF,R^if_*\underline{\QQ}_X)$ for any $\calF\in D^b_c(X)$ acts as multiplication by $N^i$.  
\end{lemma}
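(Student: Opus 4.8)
The plan is to construct the endomorphism $[N]$ from a self-map of $X$ (or rather of the torus fibres) that acts on cohomology the way multiplication by $N$ acts on $H^*$ of a torus. Concretely, the stalks of $R^if_*\underline{\QQ}_X$ over a point in $\mathring{\text{Sk}}_k$ are $\bigwedge\nolimits^i\QQ^k$, i.e. the degree-$i$ part of $H^*(T^k,\QQ)$, by the proper base change computation \eqref{stalkdescr}. On a genuine torus $T^k$, the multiplication-by-$N$ isogeny $m_N\colon T^k\to T^k$ induces on $H^i(T^k,\QQ)=\bigwedge\nolimits^i\QQ^k$ multiplication by $N^i$. So the idea is: since the $T^n$-torsor defining $X$ has trivial non-torsion part, the order of its torsion class divides some $N_0$; take $N$ to be a suitable multiple of $N_0$ (and of the relevant lattice indices), so that the fibrewise map $m_N$ on the $T^n$-torsor $\calT$ descends compatibly to a self-map of $X$ over $P_\bullet$.

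The key steps, in order, are as follows. First I would observe that because the torsor's non-torsion part is trivial, there is a finite cover — equivalently, after replacing the torsor by its $N_0$-th power it becomes trivial — so that multiplication by $N$ on fibres is well-defined as an endomorphism $\mu_N\colon X\to X$ commuting with $f$ (it sends each fibre $T^k/\Gamma$-type quotient to itself by $x\mapsto Nx$). Second, I would pull back the constant sheaf along $\mu_N$: the adjunction unit gives $\underline{\QQ}_X\to R\mu_{N*}\underline{\QQ}_X$, and since $\mu_N$ is proper with $f\circ\mu_N = f$, applying $Rf_*$ yields an endomorphism $[N]\colon Rf_*\underline{\QQ}_X\to Rf_*\underline{\QQ}_X$ in $\calD^b_c(P_\bullet)$. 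Third, I would identify the induced map on cohomology sheaves: by proper base change the stalk of $[N]$ at $x\in\alpha\in\mathring{\text{Sk}}_k$ is the map $H^i(f^{-1}(x))\to H^i(f^{-1}(x))$ induced by multiplication by $N$ on the torus fibre, which on $\bigwedge\nolimits^i\QQ^k$ is multiplication by $N^i$. Hence $[N]$ acts on $R^if_*\underline{\QQ}_X$ — stalk by stalk, and therefore globally since these are subsheaves of constant-type sheaves — as multiplication by $N^i$. Finally, for any $\calF\in D^b_c(X)$ (or $D^b_c(P_\bullet)$), functoriality of $\text{Ext}$ shows that post-composition with $[N]$ on $\text{Ext}^{\,j}(\calF,R^if_*\underline{\QQ}_X)$ is multiplication by the scalar $N^i$, because $[N]$ is literally the scalar $N^i$ on the object $R^if_*\underline{\QQ}_X$.

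The main obstacle I anticipate is the first step: making precise that the fibrewise multiplication-by-$N$ map genuinely descends to a well-defined \emph{global} endomorphism of the broken toric variety $X$, compatibly across all the cells and gluings. Over a single toric component with polytope $P$, writing $X$ as a quotient of $P\times T^n$ (Lemma \ref{quotient}), multiplication by $N$ in the $T^n$-factor descends to $X$ because it is equivariant for the relevant sublattice collapses occurring over each face — one needs $N$ divisible by all the relevant torsion orders and indices so the map respects the identifications $T^n\twoheadrightarrow T^k$ over $\mathring{\text{Sk}}_k$. Across the gluings of components, the hypothesis that the torsor's non-torsion part is trivial is exactly what is needed: the gluing isomorphisms $\alpha_{ij}$ differ from the trivial gluing by torsion elements, and multiplication by $N$ (with $N$ killing that torsion) commutes with them, so the fibrewise $\mu_N$ on $\calT$ is compatible with the descent to $X$. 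Once $\mu_N\colon X\to X$ over $P_\bullet$ exists, the remaining steps are formal: proper base change, the stalk computation on wedge powers, and functoriality of derived $\text{Ext}$.
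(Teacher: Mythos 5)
Your overall architecture matches the paper's: build a fibrewise power map, descend it to an endomorphism of $X$ over $P_\bullet$, use adjunction and $f\circ[N]=f$ to get an endomorphism of $Rf_*\underline{\QQ}_X$, compute its effect on stalks by proper base change, and conclude by functoriality of $\text{Ext}$. The formal steps (your second through fourth) are fine. But the step you yourself flag as the main obstacle is resolved incorrectly, and this is a genuine gap. You choose $N$ to be a multiple of the torsion order, asserting that ``multiplication by $N$ (with $N$ killing that torsion) commutes with the gluing isomorphisms $\alpha_{ij}$.'' It does not: the gluings act by translation by $\alpha_{ij}$ in the torsor, and the $N$-th power map sends $\alpha_{ij}\cdot x$ to $\alpha_{ij}^N x^N$, so it intertwines translation by $\alpha_{ij}$ with translation by $\alpha_{ij}^N$. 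If $\alpha_{ij}^N=1$, the locally defined power maps glue to a map from $\calT$ to the \emph{trivial} torsor $T^n\times P_\bullet$ (equivalently, from $X$ to the trivially glued broken toric variety), not to an endomorphism of $X$. The same problem afflicts your phrase ``after replacing the torsor by its $N_0$-th power it becomes trivial'': that produces a map to a different total space, not a self-map.

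The correct choice, which is what the paper does, is the opposite congruence: first represent the class by a genuinely $m$-torsion cocycle $\{\alpha_{ij}\}$ with values in $T^n_\CC[m]$ (using the long exact sequence of $0\to T^n_\CC[m]\to T^n_\CC\xrightarrow{\times m} T^n_\CC\to 0$), and then take $N\equiv 1 \pmod m$. Then $\alpha_{ij}^N=\alpha_{ij}$, so the local maps $(x,y)\mapsto(x^N,y)$ are compatible with the gluings and define an endomorphism of $\calT$, hence of $X$, over $P_\bullet$; the stalkwise action on $H^i(T^k)$ is still multiplication by $N^i$ since $N$ is just some integer $>1$. (A minor additional point: over a single toric component no divisibility by ``lattice indices'' is needed at all --- the power map preserves every subtorus being collapsed over the faces, so it descends for every $N$; the only constraint comes from the gluing cocycle, and it is a congruence to $1$, not to $0$.) With that correction your argument coincides with the paper's proof.
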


\begin{proof}
If $X$ is a quotient of $T^n\times P_\bullet$ we can define $[N]:T^n\times P_\bullet \to T^n\times P_\bullet$ by $(x,y)\mapsto (x^N,y)$.  Then this action descends to an action on the $X$ which preserves the fibres of $f:X \to P_\bullet$ by definition.  

Otherwise the torsor $\calT$ is torsion, meaning that it corresponds to a cocycle in Čech cohomology $\{\alpha_{ij}\} = \alpha \in H^1(P_\bullet, T^n_\CC)$ for which $\alpha^m=1$ for some integer $m>1$. We can assume that $\{\alpha_{ij} \}$ actually forms a cocycle in $H^1(P_\bullet, T^n_\CC[m])$ by looking at the long exact sequence associated to $0\to T^n_\CC[m]\to T^n_\CC \xrightarrow[]{\times m} T^n_\CC \to 0$, namely
$$\dots\to H^1(T^n_\CC[m]) \to H^1(T^n_\CC[m]) \xrightarrow[]{\times m} H^1(T^n_\CC[m]) \to \dots$$
 Since $\alpha$ lies in the kernel of the multiplication by $m$ map, we can pull it back to a a cocycle in $H^1(P_\bullet, T^n_\CC[m])$ and so $\alpha_{ij}^m=1$. Now choose a positive integer $N$ which is congruent to $1$ modulo $m$ and define $[N]$ acting on $\calT$ as follows:  Locally on $T^n\times U_i$, let $[N]$ act as $(x,y)\mapsto (x^N,y)$. This action glues to form a global action since if $(x,y) = \alpha_{ij}(x',y')$, then $[N](x,y) = \alpha_{ij}[N](x',y')$ as required. This action then yields an action on $X$.

With $[N]$ so defined, let us describe how it descends to $Rf_*\underline{\QQ}_X$. Adjunction provides a map $\underline{\QQ}_{X} \to R[N]_*R[N]^*\underline{\QQ}_{X} = R[N]_*\underline{\QQ}_X$ which we can compose with $Rf_*$ to get a map $Rf_*\underline{\QQ}_X\to Rf_*R[N]_*\underline{\QQ}_X$. Consider then the diagram 
\[
\begin{tikzcd}
   X \arrow{rr}{[N]} \arrow[swap]{dr}{f} & & X \arrow{dl}{f} \\[10pt]
    & {P_\bullet}
\end{tikzcd}
\]
from which we know that $Rf_*\underline{\QQ}_X = Rf_*R[N]_*\underline{\QQ}_X$. Thus, we have an endomorphism of $Rf_*\underline{\QQ}_X$ which we also call $[N]$. Further, this induces endomorphisms $[N]$ of the sheaves $R^if_*\underline{\QQ}_X$. By proper base change, we know how $[N]$ acts on the stalks of $R^if_*\underline{\QQ}_X$.  In particular, recall that $(R^if_*\underline{\QQ}_X)_x = H^i(f^{-1}(x),\underline{\QQ}_X|_{f^{-1}(x)}) = H^i(T^{k})$ for $x\in\mathring{\text{Sk}}_k(P_\bullet)$. Since the action on the fibres of $f$ was induced directly from the map $[N]$ on $T^n\times {P_\bullet}$, $[N]$ acts as the $N$-th power map on $f^{-1}(x)$.  Hence, the action on $H^i(T^{k})$ is given by multiplication by $N^i$.  In particular, $[N]$ acts as multiplication by $N^i$ on $R^if_*\underline{\QQ}_X$ and similarly on $\text{Ext}^{\,j}(\calF,R^if_*\underline{\QQ}_X)$. 

\end{proof}

\begin{theorem}\label{ff}
For $X$ an $n$-dimensional broken toric variety and $f$ the map from $X$ to its polytope complex $P_\bullet$, there is an isomorphism in $\mathcal{D}^b_c(P_\bullet)$
$$Rf_*\underline{\QQ}_X \cong \bigoplus_{i=0}^{2n}R^if_*\underline{\QQ}_X[-i].$$
This implies the degeneration at the $E_2$ page of the Leray spectral sequence associated to $f$
$$E^{pq}_2 = H^p(P_\bullet, R^qf_*\underline{\QQ}_X).$$
\end{theorem}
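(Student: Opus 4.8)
The plan is to deduce this from Deligne's decomposition criterion: if a bounded complex $K\in\mathcal{D}^b_c(P_\bullet)$ carries an endomorphism acting on each cohomology sheaf $\mathcal{H}^i(K)$ by multiplication by a scalar $\lambda_i\in\QQ$, with the $\lambda_i$ pairwise distinct, then $K\cong\bigoplus_i\mathcal{H}^i(K)[-i]$. The endomorphism I would feed in is the map $[N]$ of Lemma \ref{end}, which acts on $R^if_*\underline{\QQ}_X$, and on $\text{Ext}^{j}(\calF,R^if_*\underline{\QQ}_X)$ more generally, as multiplication by $N^i$; we may take $N\geq 2$ (for $N=1$ the map $[N]$ is the identity), so $N^0<N^1<N^2<\cdots$ are indeed distinct. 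Observe first that the sum is finite: by the stalk formula \eqref{stalkdescr} the fibres of $f$ are tori of dimension at most $n$, so $R^if_*\underline{\QQ}_X=0$ for $i>n$, and the $2n$ appearing in the statement is just a harmless over-estimate.

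Granting that we are in the setting of Lemma \ref{end} (torsor with trivial non-torsion part), I would run the criterion by induction along the canonical truncation tower, establishing $\tau_{\leq i}Rf_*\underline{\QQ}_X\cong\bigoplus_{j\leq i}R^jf_*\underline{\QQ}_X[-j]$ compatibly with $[N]$. At the inductive step the only thing to verify is that the triangle
\[
\tau_{\leq i-1}Rf_*\underline{\QQ}_X\longrightarrow\tau_{\leq i}Rf_*\underline{\QQ}_X\longrightarrow R^if_*\underline{\QQ}_X[-i]\xrightarrow{\ o\ }\tau_{\leq i-1}Rf_*\underline{\QQ}_X[1]
\]
splits, i.e. that its connecting class $o$ vanishes. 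Using the inductive decomposition of $\tau_{\leq i-1}Rf_*\underline{\QQ}_X$ one gets
\[
o\in\Hom\big(R^if_*\underline{\QQ}_X[-i],\tau_{\leq i-1}Rf_*\underline{\QQ}_X[1]\big)\cong\bigoplus_{j=0}^{i-1}\text{Ext}^{\,i-j+1}\big(R^if_*\underline{\QQ}_X,R^jf_*\underline{\QQ}_X\big),
\]
a direct sum of $\text{Ext}$-groups of degree $\geq 2$. Since the connecting map is natural, $o$ commutes with $[N]$; by Lemma \ref{end} the action of $[N]$ on the source is multiplication by $N^i$ while on the $j$-th summand of the target it is multiplication by $N^j$, so the component $o_j$ satisfies $(N^i-N^j)o_j=0$ and therefore vanishes, $N^i-N^j$ being a nonzero rational. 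Hence $o=0$ and $\tau_{\leq i}Rf_*\underline{\QQ}_X$ splits off the top cohomology sheaf; choosing the splitting $[N]$-equivariantly is automatic — the splittings form a torsor under $\Hom(R^if_*\underline{\QQ}_X[-i],\tau_{\leq i-1}Rf_*\underline{\QQ}_X)$, a $\QQ$-vector space on which $[N]$ acts with no nonzero invariants by the same weight mismatch — so the induction continues, and after finitely many steps produces the desired isomorphism.

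For a general broken toric variety the $T^n$-torsor provided by Lemma \ref{quotient} need not have trivial non-torsion part, so first I would reduce to the case handled by Lemma \ref{end}. The mechanism is that neither $Rf_*\underline{\QQ}_X$ nor even the homeomorphism type of $f\colon X\to P_\bullet$ feels the non-torsion directions of the torsor class: these can be taken to come from $\underline{\CC}^n$ via the exponential sequence $0\to\underline{\ZZ}^n\to\underline{\CC}^n\to T^n_\CC\to 0$, and modifying the gluing cocycle by such a class amounts to regluing the toric components by translations in the connected group $T^n_\CC$, which are isotopic to the identity through $T^n$-equivariant automorphisms and hence do not alter $f$ up to isomorphism over $P_\bullet$. (This is already the phenomenon seen in Example \ref{ex1}, where an entire one-parameter family of torsors yields the same broken toric variety.) I expect this reduction to be the main obstacle: it is the one step that uses the analytic, rather than purely combinatorial, nature of the construction, and it must be set up carefully enough that the comparison of derived pushforwards before and after the modification is a genuine isomorphism in $\mathcal{D}^b_c(P_\bullet)$.

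The degeneration statement is then formal. The Leray spectral sequence of $f$ is the hypercohomology spectral sequence of the complex $Rf_*\underline{\QQ}_X$, with $E_2^{pq}=H^p(P_\bullet,R^qf_*\underline{\QQ}_X)$ abutting to $H^{p+q}(X,\underline{\QQ}_X)$. From the decomposition just established, $\dim_\QQ H^m(X,\underline{\QQ}_X)=\sum_{p+q=m}\dim_\QQ H^p(P_\bullet,R^qf_*\underline{\QQ}_X)$ for every $m$, so the total dimension of the abutment already equals the total dimension of the $E_2$ page in each total degree; since every later differential can only decrease these dimensions, they must all vanish and $E_2=E_\infty$.
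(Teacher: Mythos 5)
Your splitting argument is essentially the paper's own: the paper likewise feeds the endomorphism $[N]$ of Lemma \ref{end} into the truncation triangles and kills the connecting maps by the eigenvalue mismatch $N^{i}\neq N^{j}$ (it phrases this via $\mathrm{Ext}^{\,j}(\calF,-)$ for all $\calF$ and Yoneda rather than via the single obstruction class $o$, but the two packagings are equivalent), and your dimension count for $E_2$-degeneration is also fine. The genuine gap is in your reduction of the general case to the case of a torsor with trivial non-torsion part. From the fact that the regluing translations coming from $\underline{\CC}^n$ are isotopic to the identity you conclude that they ``do not alter $f$ up to isomorphism over $P_\bullet$''; but an isotopy of gluing data does not by itself produce an isomorphism of the glued spaces over $P_\bullet$ (the gluing is along closed toric subvarieties, not a clutching datum for a fibre bundle, and automorphisms of a component commuting with $f$ are only the compact torus, so a non-compact translation cannot simply be absorbed). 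What the isotopy gives you is a family of broken toric varieties over the isotopy parameter, and you still owe the argument that the derived pushforward is locally constant in that family --- which is exactly the content of the paper's second step: it builds $\mathfrak{X}\to P_\bullet\times[0,1]$ over a path in a connected component of $H^1(P_\bullet,T^n_\CC)$, sets $\calF=Rf_*\underline{\QQ}_{\mathfrak{X}}$, and shows via the contraction $g$ of $[0,1]$ and the adjunction map $g^{-1}g_*\calF\to\calF$ (checked on stalks of the cohomology sheaves, which are determined by $P_\bullet$ alone) that $Rf_*\underline{\QQ}_{X_t}$ is independent of $t$. Note also that the paper is careful to claim only this invariance of $Rf_*\underline{\QQ}_X$, not of $X\to P_\bullet$ itself; your stronger assertion is both unjustified and unnecessary. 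You flagged this step as the main obstacle, correctly, but as written it is not closed, and closing it requires precisely the local-constancy argument above (together with the observation, which the paper makes via universal coefficients, that every connected component of $H^1(P_\bullet,T^n_\CC)$ contains a class with trivial non-torsion part).
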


\begin{proof} This proof consists of two main steps, the first being to show that the statement holds for a broken toric variety whose gluing data consists of a $T^n$-torsor whose non-torsion part is trivial, using the endomorphism provided by Lemma \ref{end}. The second part of the proof is to notice that each component of the space $H^1(P_\bullet,T^n_\CC)$ indexing different possible broken toric varieties over $P_\bullet$ contains such a torsor, and that the derived pushforward complex does not vary in components.

The strategy for the first part of the proof will be to show that \begin{equation}\label{homdecomp}
\text{Hom}(-,Rf_*\underline{\QQ}_X) = \bigoplus_{i=0}^{2n} \text{Hom}(-,R^if_*\underline{\QQ}_X[-i]),
\end{equation}
 which yields the desired decomposition by the Yoneda Lemma. 
 
Truncating the complex $Rf_*\underline{\QQ}_X$ yields distinguished triangles 
$$\tau_{\leq i}Rf_*\underline{\QQ}_{X} \to \tau_{\leq i+1}Rf_*\underline{\QQ}_{X}  \to \tau_{\geq i+1}\tau_{\leq i+1}Rf_*\underline{\QQ}_{X}\xrightarrow[]{+1}$$
for $i=0,\dots, n-1$.  Each of these yields a long exact sequence after applying the functor $\text{Hom}(\calF,-)$ for any $\calF\in D^b_c(X)$
$$\ldots\to\text{Ext}^j(\calF,\tau_{\leq i}Rf_*\underline{\QQ}_{X}) \to\text{Ext}^j(\calF, \tau_{\leq i+1}Rf_*\underline{\QQ}_{X} ) \to \text{Ext}^j(\calF, R^{i+1}f_*\underline{\QQ}_X[-(i+1)])\to\ldots$$

Now we claim that the connecting homomorphisms 
\begin{equation*}\text{Ext}^j(\calF, R^{i+1}f_*\underline{\QQ}_X[-(i+1)])  \xrightarrow[]{\delta_j}\text{Ext}^{j+1}(\calF,\tau_{\leq i}Rf_*\underline{\QQ}_{X})
\end{equation*}
are zero for all $i,j$. This will be sufficient to show \eqref{homdecomp} because it implies in particular that
$$\text{Hom}(\calF, \tau_{\leq i+1}Rf_*\underline{\QQ}_{X} )  \cong \text{Hom}(\calF,\tau_{\leq i}Rf_*\underline{\QQ}_{X})\oplus \text{Hom}(\calF, R^{i+1}f_*\underline{\QQ}_X[-(i+1)])$$ for all $i$.  The proof of this claim is by induction on $i$.
 
First consider the $i=0$ case.  Here we are asking about the map $\text{Ext}^j(\calF,R^1f_*\underline{\QQ}_X)\to \text{Ext}^{j+1}(\calF, R^0f_*\underline{\QQ}_X)$.  By Lemma \ref{end}, the endomorphism $[N]$ acts on $\text{Ext}^j(\calF,R^1f_*\underline{\QQ}_X)$ with eigenvalue $N$ and on $\text{Ext}^{j+1}(\calF, R^0f_*\underline{\QQ}_X)$ trivially, and thus the map must be zero.

Now assume that the statement is true for all $k\leq i-1$.  This gives us a (non-canonical) isomorphism $\text{Ext}^{j+1}(\calF,\tau_{\leq k}Rf_*\underline{\QQ}_X) \cong \bigoplus_{i=0}^{k}\text{Ext}^{j+1}(\calF,R^if_*\underline{\QQ}_X[-i])$.  The map we are interested in can then be written as $$\text{Ext}^{j}(\calF,R^{k+1}f_*\underline{\QQ}_X[-(k+1)])\to \bigoplus_{i=0}^{k}\text{Ext}^{j+1}(\calF,R^if_*\underline{\QQ}_X[-i]),$$ which then decomposes into maps $\text{Ext}^{j}(\calF,R^{k+1}f_*\underline{\QQ}_X[-(k+1)])\to \text{Ext}^{j+1}(\calF,R^if_*\underline{\QQ}_X[-i])$ for $i=0,\dots,k$, each of which is zero since we have $[N]$ acting as $N^{k+1}$ on the domain and as $N^i$ on the range. The reader is encouraged to take a short break here to enjoy a beverage of their choice.

Moving on to the second part of the argument as described in the first paragraph of the proof, recall that the components of $H^1(P_\bullet,T^n_\CC)$ are indexed by the torsion elements of $H^1(P_\bullet,{\ZZ})$ by the universal coefficient theorem. This description, namely $H^1(P_\bullet,T^n_\CC) \cong \text{Hom}(H_1(P_\bullet,\ZZ),T^n_\CC)$ also tells us that there is an element whose non-torsion part is trivial in each component.

It only remains to show that $Rf_*\underline{\QQ}_X$ doesn't change as the torsor defining the gluing of the toric components of $X$ varies in a connected component.  Let $X_0$ be a broken toric variety over $P_\bullet$ for which the theorem holds, $X_1$ be a broken toric variety in the same component of $H^1(P_\bullet,T^n_\CC)$, and $\gamma:[0,1]$ be a path between them. This yields a family of broken toric varieties $\mathfrak{X}\xrightarrow{f} P_\bullet\times [0,1]$. Pushing forward the constant sheaf along the family of moment maps yields $\calF := Rf_*\underline{\QQ}_{\mathfrak{X}}$, a complex of sheaves on $P_\bullet\times [0,1]$. We will show that $\calF|_{P_\bullet\times\{t\}} \cong Rf_*\underline{\QQ}_{X_t}$ is independent of $t\in[0,1]$.

If $g$ is the contraction map of $[0,1]$ to $\{0\}$, then $g^{-1}g_*\calF$ is a sheaf on $P_\bullet$ and adjunction furnishes us with a morphism $g^{-1}g_*\calF \to \calF$. This is a quasi-isomorphism as can been seen by looking at the stalks of the cohomology sheaves of the complexes: we have $\left(\mathcal{H}^i(g^{-1}g_*\calF)\right)_{(x,t)} \cong \left(R^if_*\underline{\QQ}_{X_0}\right)_x$ and $\left(\mathcal{H}^i(\calF)\right)_{(x,t)}\cong \left(R^if_*\underline{\QQ}_{X_t}\right)_x$.  Both of these stalks are described entirely by $P_\bullet$, in particular independently of $t$, so they are isomorphic.  So $\calF|_{P_\bullet\times\{t\}} \cong Rf_*\underline{\QQ}_{X_t}$ is independent of $t$, and so the theorem holds for all broke toric varieties.
\end{proof}

The following corollary is the basis for all of our cohomology calculations.

\begin{corollary}\label{cohomcalc}
For any broken toric variety $X$
$$
H^k(X,\underline{\QQ}_X) = \bigoplus_{i+j=k}H^j(P_\bullet(X),R^if_*\underline{\QQ}_X).
$$
\end{corollary}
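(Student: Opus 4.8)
The plan is to deduce the corollary directly from the decomposition established in Theorem \ref{ff}: once $Rf_*\underline{\QQ}_X$ splits as a direct sum of shifted sheaves, passing to hypercohomology can present no further obstruction.

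First I would record the standard fact that $R\Gamma(X,-)$ factors as $R\Gamma(P_\bullet,-)\circ Rf_*$, so that
$$H^k(X,\underline{\QQ}_X)=\mathbb{H}^k(P_\bullet,Rf_*\underline{\QQ}_X).$$
Then I would substitute the isomorphism $Rf_*\underline{\QQ}_X\cong\bigoplus_{i=0}^{2n}R^if_*\underline{\QQ}_X[-i]$ of Theorem \ref{ff} into the right-hand side. Since hypercohomology is an additive functor it commutes with this finite direct sum, giving
$$H^k(X,\underline{\QQ}_X)\cong\bigoplus_{i=0}^{2n}\mathbb{H}^k\bigl(P_\bullet,R^if_*\underline{\QQ}_X[-i]\bigr).$$

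Next I would simplify each summand: for an honest sheaf $\calF$ one has $\mathbb{H}^k(P_\bullet,\calF[-i])=\mathbb{H}^{k-i}(P_\bullet,\calF)=H^{k-i}(P_\bullet,\calF)$, the shift merely translating cohomological degree. Applying this with $\calF=R^if_*\underline{\QQ}_X$ and re-indexing by $j=k-i$ turns the sum into $\bigoplus_{i+j=k}H^j(P_\bullet,R^if_*\underline{\QQ}_X)$, which is the claim; the terms with $i<0$ or $i>2n$ are empty, and the ones with $j<0$ vanish trivially (in fact Lemma \ref{j<ivanishing} kills even those with $j<i$, though that is not needed here).

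I do not expect any genuine obstacle: the content is entirely contained in Theorem \ref{ff}, and what remains is the formal manipulation of the hypercohomology of a split complex. The only points requiring a modicum of care are to make sure the decomposition is invoked in $\mathcal{D}^b_c(P_\bullet)$, so that applying $R\Gamma(P_\bullet,-)$ is legitimate, and that the degree bookkeeping in the shift $[-i]$ matches the conventions fixed earlier; neither is a real difficulty.
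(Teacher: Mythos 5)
Your argument is correct and is exactly the intended derivation: the paper states this corollary without proof precisely because it follows formally from Theorem \ref{ff} via $H^k(X,\underline{\QQ}_X)\cong \mathbb{H}^k(P_\bullet,Rf_*\underline{\QQ}_X)$, additivity of hypercohomology over the finite splitting, and the degree shift. Your bookkeeping (including the remark that terms outside the relevant range vanish) matches what the paper leaves implicit, so there is nothing to add.
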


Having proven that the cohomology of a broken toric variety $X$ does not depend on how the components are glued together, we assume for the rest of the article that the torsor defining the gluing is trivial and in particular that $R^if_*\underline{\QQ}_X$ is a subsheaf of $\bigwedge\nolimits^i\underline{\QQ}_X^n$.

\section{Leray = Weight}\label{4}

To any map $f:X\to Y$ of algebraic varieties, one can assign the Leray filtration associated to $f$ to the cohomology groups of $X$. To do this, first apply the truncation functor $\tau_{\leq k}$ to the complex $Rf_*\underline{\QQ}_X$
\begin{align}
&\tau_{\leq k}\left(\cdots \xrightarrow{}(Rf_*\underline{\QQ}_X)^{k-1} \xrightarrow{d^{k-1}}(Rf_*\underline{\QQ}_X)^{k}\xrightarrow{d^{k}}\cdots \right) \nonumber
\\
&\qquad\qquad = \left( \cdots \xrightarrow{}(Rf_*\underline{\QQ}_X)^{k-1} \xrightarrow{d^{k-1}} \text{ker}d^{k}\xrightarrow{d^{k}} 0 \xrightarrow{}\cdots \right).\nonumber
\end{align}

The inclusion map $\tau_{\leq k}Rf_*\underline{\QQ}_X \to Rf_*\underline{\QQ}_X$ then induces a map
$$H^i(Y,\tau_{\leq k}Rf_*\underline{\QQ}_X)\xrightarrow{} H^i(Y,Rf_*\underline{\QQ}_X) \cong H^i(X,\underline{\QQ}_X),$$ the image of which is the $k$-th piece of the Leray filtration $L_k H^i(X,\underline{\QQ}_X).$

Theorem \ref{ff} then gives us an explicit formulation for the Leray filtration on the cohomology of a broken toric variety, namely
\begin{equation}\label{leray}
L_kH^i(X,\underline{\QQ}_X) = \bigoplus_{\substack{p+q=i \\ q\leq k}}H^p(P_\bullet,R^qf_*\underline{\QQ}_X).
\end{equation}

The Leray filtration can equivalently be described as the filtration arising from the degeneration of the Leray spectral sequence. In the same way, one obtains the weight filtration $W_kH^i(X,\underline{\QQ}_X)$ from Deligne's weight spectral sequence \cite{D71} (see \cite{GNPP88} for the singular case).

The main ingredient for proving the equivalence of these two filtrations is the following lemma. Note that given a polytope $P$, we denote by $X(P)$ the toric variety associated to it.

\begin{lemma}
Let $X$ be an $n$-dimensional broken toric variety with polytope complex $P_\bullet$ and set $\tilde{X} := \bigsqcup_{\overline{A}\in\text{Sk}_{n}(P_\bullet)}X(\overline{A})$.  In addition let $Y$ be the singular locus of $X$, i.e. if we set $$S = \text{Sk}_{n-1}(P_\bullet)\setminus \{\alpha\in\mathring{\text{Sk}}_{n-1}(P_\bullet):\alpha\in\partial A \text{ for only one } \overline{A}\in\text{Sk}_n(P_\bullet)\}$$ then $Y=X|_{f^{-1}(S)}$,
and let $\tilde{Y}$ be its normalization
${\tilde{Y} := \bigsqcup_{\overline{\alpha}\in S}X(\overline{\alpha})}.$

Then
\begin{enumerate}
\item There is a long exact sequence
\begin{equation}\label{4.1.1}
 \cdots\to H^j(X) \to H^j(Y) \oplus H^j(\tilde{X}) \to H^j(\tilde{Y}) \to\cdots
 \end{equation} which respects both the weight and Leray filtrations.
 \item  There is a long exact sequence
  \begin{align*}
 &\cdots\to H^j(P_\bullet, R^if_*\underline{\QQ}_{X}) \to H^j((P_\bullet)_Y, R^if_*\underline{\QQ}_{Y}) \oplus H^j((P_\bullet)_{\tilde{X}}, R^if_*\underline{\QQ}_{\tilde{X}})
 \\
 &\qquad\qquad \to H^j((P_\bullet)_{\tilde{Y}}, R^if_*\underline{\QQ}_{\tilde{Y}}) \to H^{j+1}(P_\bullet, R^if_*\underline{\QQ}_{X})\to\cdots
 \end{align*}
 which respects the weight filtration.
 \end{enumerate}
\end{lemma}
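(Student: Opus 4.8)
The plan is to produce both long exact sequences from a single Mayer–Vietoris-type short exact sequence of complexes of sheaves on $P_\bullet$, then check the filtration compatibilities separately. First I would set up the combinatorial ``resolution'' of $X$ by $\tilde X$ and of $Y$ by $\tilde Y$: the disjoint union $\tilde X \to X$ (normalization of the toric components) and $\tilde Y \to Y$ fit into a commutative square, and the key point is that the square
\[
\begin{tikzcd}
\tilde Y \arrow{r} \arrow{d} & \tilde X \arrow{d} \\
Y \arrow{r} & X
\end{tikzcd}
\]
is both a pushout of topological spaces and ``exact'' on constant sheaves, yielding a short exact sequence $0 \to \underline{\QQ}_X \to \nu_*\underline{\QQ}_Y \oplus \mu_*\underline{\QQ}_{\tilde X} \to \rho_*\underline{\QQ}_{\tilde Y} \to 0$ where $\nu,\mu,\rho$ are the obvious maps; concretely a stalk at $x \in X$ records the components of $X$ through $x$ versus the components through the corresponding point of $Y$, and on the $0$-cochain vs.\ $1$-cochain level this is exact because an $(n-1)$-cell of $P_\bullet$ lying in the boundary of two top cells contributes one relation. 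Applying $Rf_*$ (noting $Rf_* \circ \nu_* = R(f\circ \nu)_*$, etc.) and taking cohomology gives (1); applying instead $R^i f_*$ and then $H^j(P_\bullet,-)$ — legitimate because all the sheaves here are of the cell-compatible type controlled by Proposition \ref{cc} and Lemma \ref{openclosed}, so the spectral sequence computing $H^*(X)$ from $R^if_*$ degenerates by Theorem \ref{ff} for each of $X, Y, \tilde X, \tilde Y$ — gives the second sequence after identifying $H^j((P_\bullet)_Y, R^if_*\underline{\QQ}_Y)$ with $H^j(P_\bullet, R^i(f\circ\nu)_*\underline{\QQ}_Y)$ and similarly for the others.

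Next I would address the Leray compatibility in (1). The maps $f\circ\nu$, $f\circ\mu$, $f\circ\rho$ all factor through $f$, so the morphisms in the short exact sequence of complexes are compatible with the truncation functors $\tau_{\le k}$ (truncation commutes with pushforward along the inclusions, which are finite hence have no higher direct images affecting the relevant degrees — or more cleanly, the inclusions $Y\hookrightarrow X$ etc.\ are closed so $\nu_* = R\nu_*$ and truncation is natural). Thus the whole long exact sequence is the cohomology realization of a morphism of truncated complexes, and the image description of $L_k$ shows the connecting and restriction maps are strictly compatible with $L_\bullet$; combined with \eqref{leray} and the degeneration from Theorem \ref{ff}, this is essentially formal.

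For the weight compatibility in both (1) and (2): the long exact sequence (1) is exactly the descent/Mayer–Vietoris sequence associated to the proper hypercovering $\{\tilde X \sqcup Y \Rightarrow X\}$ (more precisely the semisimplicial resolution with $\tilde X\sqcup Y$ in degree $0$ and $\tilde Y$ in degree $1$), and by Deligne's theory in the singular case (\cite{GNPP88}, as cited) all maps in the resulting spectral sequence — hence in this long exact sequence — are morphisms of mixed Hodge structures, in particular strictly compatible with $W_\bullet$. For (2), I would observe that the filtration on $H^j(P_\bullet, R^if_*\underline{\QQ}_X)$ induced as a graded piece of the Leray filtration on $H^{i+j}(X)$ inherits a weight filtration from the ambient one (each $R^if_*$ term being a sub-object of the cohomology after degeneration), and the maps in the sequence of (2) are the associated graded maps of those in (1), hence again strict for $W_\bullet$. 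The main obstacle I anticipate is pinning down that $Rf_* \nu_*\underline{\QQ}_Y$ really computes cohomology on the correct sub-polytope-complex $(P_\bullet)_Y = S$ with the correct cell-compatible structure, i.e.\ checking the stalk identifications and that the $E_2$-degeneration of Theorem \ref{ff} applies verbatim to $Y, \tilde X, \tilde Y$ (these are again broken toric, or disjoint unions of toric varieties, so it does — but this needs to be said), and making the interplay between ``graded pieces of Leray'' and ``weight'' in part (2) precise rather than hand-wavy.
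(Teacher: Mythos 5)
Your overall route is the same as the paper's: part (1) is the Mayer--Vietoris sequence of the square relating $X$, $Y$, $\tilde{X}$, $\tilde{Y}$, weight-compatibility is imported from \cite{GNPP88}, and Leray-compatibility of (1) is deduced from the sequences of part (2), obtained by pushing the square down to the polytope complexes and summing against the description \eqref{leray} of the Leray filtration provided by Theorem \ref{ff}; your further remarks (truncation-compatibility, and that Theorem \ref{ff} applies verbatim to $Y$, $\tilde{X}$, $\tilde{Y}$ since these are again broken toric or disjoint unions of toric varieties) are consistent with this.

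However, the step you build everything on is not correct as justified. The four-term sequence $0\to\underline{\QQ}_X\to\nu_*\underline{\QQ}_Y\oplus\mu_*\underline{\QQ}_{\tilde{X}}\to\rho_*\underline{\QQ}_{\tilde{Y}}\to 0$ is exact only when $\tilde{Y}$ is the fibre product $\tilde{X}\times_X Y$, i.e.\ the preimage of the singular locus in $\tilde{X}$: at a generic point of the double locus the stalks are then $\QQ\to\QQ\oplus\QQ^2\to\QQ^2$, with the difference map surjective and kernel the diagonal. If instead $\tilde{Y}$ is taken literally as one copy of $X(\overline{\alpha})$ for each $\alpha\in S$, the last stalk is only $\QQ$ and no choice of maps makes the sequence exact; concretely, for the necklace of three copies of $\PP^1$ in Example \ref{ex1} the alternating sum of dimensions in your long exact sequence would be $1-6+3-1+3-3=-3\neq 0$. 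Your count of ``one relation for each $(n-1)$-cell lying in the boundary of two top cells'' is precisely where this goes astray, and it also tacitly assumes that no cell of $S$ bounds three or more top cells. The repair is to work with the Cartesian square, as the paper's proof does, so that each $\alpha$ contributes once for every top cell containing it (with the corresponding reading of the groups $H^j((P_\bullet)_{\tilde{Y}},R^if_*\underline{\QQ}_{\tilde{Y}})$ in part (2)); with that adjustment the remainder of your argument, including the weight and Leray compatibilities, proceeds as in the paper.
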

\begin{proof}
The first sequence is the Mayer-Vietoris sequence associated to the Cartesian square
\[\begin{tikzcd}
	{\tilde{Y}} && {\tilde{X}} \\
	Y && X
	\arrow[from=1-1, to=1-3]
	\arrow[from=2-1, to=2-3]
	\arrow[from=1-3, to=2-3]
	\arrow[from=1-1, to=2-1]
\end{tikzcd}\]
and it respects the weight filtration by construction (see \cite{GNPP88}). It respects the Leray filtration by virtue of the existence of the second sequence. That is to say, we can take the above square and look at the higher derived pushforwards to their polytope complexes and so get Mayer-Vietoris sequences
  \begin{align*}
 &\cdots\to H^j(P_\bullet, R^if_*\underline{\QQ}_{X}) \to H^j((P_\bullet)_Y, R^if_*\underline{\QQ}_{Y}) \oplus H^j((P_\bullet)_{\tilde{X}}, R^if_*\underline{\QQ}_{\tilde{X}})
 \\
 &\qquad\qquad \to H^j((P_\bullet)_{\tilde{Y}}, R^if_*\underline{\QQ}_{\tilde{Y}}) \to H^{j+1}(P_\bullet, R^if_*\underline{\QQ}_{X})\to\cdots
 \end{align*}
 for each $i$. Taking direct sums of these sequences (in view of the description \eqref{leray} of the Leray filtration) shows that the sequence of \eqref{4.1.1} respects the Leray filtration. 
 
\end{proof}

\begin{theorem}\label{L=W}
For $X$ a broken toric variety, $$W_{2k}H^i(X,\underline{\QQ}_X) = W_{2k+1}H^i(X,\underline{\QQ}_X) = L_{k}H^i(X,\underline{\QQ}_X).$$
\end{theorem}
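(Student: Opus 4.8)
The plan is to deduce both equalities from a single operator: the algebraic self-map $[N]\colon X\to X$ of Lemma \ref{end}, which — having reduced to the trivial-torsor case as in the paragraph after Corollary \ref{cohomcalc} — is the $N$-th power map $(x,y)\mapsto(x^N,y)$ along the torus fibres. The essential observation is that $[N]$ is a morphism of complex algebraic varieties, so $[N]^*$ is an endomorphism of the mixed Hodge structure $H^i(X,\QQ)$; in particular it is strictly compatible with $W_\bullet$ and acts as a scalar on each $\mathrm{gr}^W_m H^i(X,\QQ)$. The whole argument consists in computing that scalar, and in comparing the weight and Leray filtrations through the eigenspace decomposition of $[N]^*$.

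Step 1 (Leray side). Since $[N]$ commutes with $f$, the map $[N]^*$ preserves the Leray filtration, and by Lemma \ref{end} it acts on $R^qf_*\underline{\QQ}_X$, hence on $\mathrm{gr}^L_q H^i(X,\QQ)=H^{i-q}(P_\bullet,R^qf_*\underline{\QQ}_X)$ via \eqref{leray}, as multiplication by $N^q$. As $N\geq 2$ makes the scalars $N^0,N^1,\dots$ pairwise distinct, $[N]^*$ is semisimple; writing $E_q\subseteq H^i(X,\QQ)$ for its $N^q$-eigenspace, the graded decomposition of the Leray filtration is the eigenspace decomposition, so $E_q\cong H^{i-q}(P_\bullet,R^qf_*\underline{\QQ}_X)$ and $L_kH^i(X,\QQ)=\bigoplus_{q\leq k}E_q$.

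Step 2 (weight side; the crux). I would compute the action of $[N]^*$ on $\mathrm{gr}^W_m H^i(X,\QQ)$ using a simplicial hyperresolution $\tilde X_\bullet\to X$ whose terms are the disjoint unions of the iterated intersections of the toric components of $X$. Any intersection of $T$-invariant subvarieties of a smooth projective toric variety is again a smooth projective toric variety, so every $\tilde X_p$ is a disjoint union of such, the first stage of $\tilde X_\bullet$ being exactly the Mayer--Vietoris square of the lemma preceding this theorem; this is the hyperresolution computing Deligne's weight spectral sequence $E_1^{a,b}=H^b(\tilde X_a)$, abutting to $H^{a+b}(X,\QQ)$, which degenerates at $E_2$ since $X$ is projective. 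The map $[N]$ extends compatibly over $\tilde X_\bullet$, restricting on each toric piece to the self-map induced by multiplication by $N$ on the cocharacter lattice. On a smooth projective toric variety $X'$ this self-map pulls back each torus-invariant prime divisor to $N$ times itself, hence acts on $H^2(X',\QQ)=\Pic(X')\otimes\QQ$ as multiplication by $N$ and therefore on $H^{2p}(X',\QQ)$ as multiplication by $N^p$, while $H^{\mathrm{odd}}(X',\QQ)=0$. Thus $[N]^*$ acts on $E_1^{a,b}$ as multiplication by $N^{b/2}$ for $b$ even, and $E_1^{a,b}=0$ for $b$ odd; being compatible with the differentials it acts on $\mathrm{gr}^W_m H^i(X,\QQ)=E_\infty^{i-m,m}$ as multiplication by $N^{m/2}$, and this group vanishes for odd $m$.

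Step 3 (conclusion). Combining the two: $[N]^*$ is a $W_\bullet$-preserving semisimple operator acting on $\mathrm{gr}^W_m H^i(X,\QQ)$ by the scalar $N^{m/2}$ (the group being $0$ for odd $m$), so its eigenspaces are exactly the $E_q$ of Step 1, with $E_q$ mapping isomorphically onto $\mathrm{gr}^W_{2q}$; hence $W_{2k}H^i(X,\QQ)=W_{2k+1}H^i(X,\QQ)=\bigoplus_{q\leq k}E_q$, and comparison with Step 1 gives $W_{2k}H^i=W_{2k+1}H^i=L_kH^i$. The main obstacle is Step 2: one must verify carefully that $[N]$ genuinely lifts to an endomorphism of a hyperresolution of $X$ by smooth projective toric varieties and that the induced action on each $\mathrm{gr}^W$ is the expected power of $N$; once $[N]^*$ is known to be a morphism of mixed Hodge structures the rest is formal. (The lemma preceding the theorem suggests an alternative induction on $\dim X$, propagating the equality from the smooth toric pieces and the lower-dimensional singular locus $Y$ along the Mayer--Vietoris sequence \eqref{4.1.1} and using strictness of morphisms of mixed Hodge structures; but that route still requires, as input, the purity of the Leray graded pieces of a smooth projective toric variety, which is precisely what the divisor computation above supplies.)
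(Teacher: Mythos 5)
Your route is genuinely different from the paper's (the paper reformulates the theorem as the purity statement \eqref{purity} for the groups $H^j(P_\bullet,R^if_*\underline{\QQ}_X)$ and then inducts on dimension through the Mayer--Vietoris sequences of the preceding lemma and the $5$-lemma, the smooth toric case being settled by Lemma \ref{j<ivanishing} and Corollary \ref{j>ivanishing}), but as written your argument has a genuine gap at the step you call the essential observation. The map $[N]$ of Lemma \ref{end} is \emph{not} a morphism of complex algebraic varieties: it is defined fibrewise by the $N$-th power map on the \emph{compact} torus $T^n$, so on the open orbit of a toric component it reads $re^{i\theta}\mapsto re^{iN\theta}$, which is not even holomorphic. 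Consequently $[N]^*$ is not a priori an endomorphism of mixed Hodge structures, and Steps 2 and 3 do not apply to it as stated. The map for which your Hodge-theoretic claims are valid is the algebraic toric power map (multiplication by $N$ on the cocharacter lattice, glued over the trivial torsor), but that map does \emph{not} commute with $f$: it induces a nontrivial self-map of $P_\bullet$ preserving each open cell, so your Step 1 claim that it preserves the Leray filtration and acts by $N^q$ on $\mathrm{gr}^L_q$ needs a supplementary argument (for instance, that the induced self-map of $P_\bullet$ is homotopic to the identity through stratum-preserving maps, or that the topological $[N]$ and the algebraic power map are homotopic as self-maps of $X$ and hence agree on cohomology). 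You cannot quote Lemma \ref{end} for one map and Hodge theory for the other without identifying their actions on $H^i(X,\underline{\QQ}_X)$.

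A second gap sits in Step 2: the simplicial object formed by iterated intersections of the toric components is not automatically a hyperresolution in Deligne's sense, since triple and higher intersections of $T$-invariant subvarieties are in general reducible unions of strata, hence singular; one needs a genuine (cubical) hyperresolution, or an induction on dimension through the sequence \eqref{4.1.1} as in the paper. Note also that the base case of that induction does not require your divisor computation: for a smooth projective toric variety, purity of $H^*(X,\underline{\QQ}_X)$ is classical and the concentration of the Leray $E_2$ on the diagonal $p=q$ is exactly Lemma \ref{j<ivanishing} together with Corollary \ref{j>ivanishing}. If you repair the algebraicity point and replace the naive \v{C}ech resolution by a correct one (or by the paper's Mayer--Vietoris induction), your eigenvalue comparison of the two filtrations becomes a viable and rather elegant alternative proof; without those repairs the argument is incomplete.
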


\begin{proof}
For smooth toric varieties it is well-known that their odd degree cohomology vanishes and that they have pure cohomology, which in this context means that $W_kH^{2i}(X,\underline{\QQ}_X) = 0$ for all $k \leq 2i-1$ and that $W_{2i}H^{2i}(X,\underline{\QQ}_X) = H^{2i}(X,\underline{\QQ}_X)$.  On the Leray side, Lemma \eqref{j<ivanishing} and Corollary \eqref{j>ivanishing} together show that $H^p(P_\bullet,R^qf_*\underline{\QQ}_X)$ is only nonzero for $p=q$, so that $L_kH^{2i}(X,\underline{\QQ}_X) = 0$ for $k\leq i-1$ and that $L_iH^{2i}(X,\underline{\QQ}_X) = H^{2i}(X,\underline{\QQ}_X)$.

To extend this result to all broken toric varieties, we first notice that the statement of the theorem is equivalent to asking that the weight filtration of the sheaf $R^if_*\underline{\QQ}_X$ on $P_\bullet$ is pure of weight $2k$, or in other words that 
\begin{equation}\label{purity}
W_{2k}H^j(P_\bullet, R^if_*\underline{\QQ}_X) = W_{2k+1}H^j(P_\bullet, R^if_*\underline{\QQ}_X) = H^j(P_\bullet, R^if_*\underline{\QQ}_X)
\end{equation}
for all $k\geq i$. This is because, again in view of Theorem \ref{ff}, we have
$$W_{2k}H^p(X,\underline{\QQ}_X) = W_{2k}\bigoplus_{i+j=p}H^j(P_\bullet, R^if_*\underline{\QQ}_X) = \bigoplus_{i+j=p}W_{2k}H^j(P_\bullet, R^if_*\underline{\QQ}_X)$$
which is equal to 
$$
 \bigoplus_{\substack{i+j=p \\ i\leq k}}H^j(P_\bullet,R^if_*\underline{\QQ}_X) = L_kH^p(X,\underline{\QQ}_X)$$
 if and only if Equation \eqref{purity} is true (and similary for $W_{2k+1}$).

The result now follows by induction. Starting with $n=1$, we see that the result is trivially true for $Y$ and $\tilde{Y}$, and true for $\tilde{X}$ since it is a disjoint union of toric varieties. To show that \eqref{purity} is true for $X$, consider the inclusion map of complexes

\[\begin{tikzcd}[column sep=tiny]
	\cdots & { W_{2k}H^j(R^if_*\underline{\mathbb{Q}}_{X})} & { W_{2k}H^j(R^if_*\underline{\mathbb{Q}}_{Y}) \oplus W_{2k}H^j(R^if_*\underline{\mathbb{Q}}_{\tilde{X}})} & { W_{2k}H^j(R^if_*\underline{\mathbb{Q}}_{\tilde{Y}})} & \cdots \\
	\cdots & {H^j(R^if_*\underline{\mathbb{Q}}_{X})} & { H^j(R^if_*\underline{\mathbb{Q}}_{Y}) \oplus H^j(R^if_*\underline{\mathbb{Q}}_{\tilde{X}})} & { H^j(R^if_*\underline{\mathbb{Q}}_{\tilde{Y}})} & \cdots
	\arrow[from=1-2, to=1-3]
	\arrow[from=1-3, to=1-4]
	\arrow[from=1-4, to=1-5]
	\arrow[from=1-1, to=1-2]
	\arrow[from=2-3, to=2-4]
	\arrow[from=2-4, to=2-5]
	\arrow[from=2-1, to=2-2]
	\arrow[from=2-2, to=2-3]
	\arrow[hook, from=1-2, to=2-2]
	\arrow["\sim", from=1-3, to=2-3]
	\arrow["\sim", from=1-4, to=2-4]
\end{tikzcd}\]

We see that the inclusion $W_{2k}H^j(R^if_*\underline{\mathbb{Q}}_{X})\hookrightarrow H^j(R^if_*\underline{\mathbb{Q}}_{X})$ is in fact an isomorphism by the $5$-lemma and so the proof is complete.

\end{proof}

\section{Cell-Compatible Sheaves on Polytopes}\label{5}

Here, we will define a sequence of cell-compatible sheaves which are then used reproduce the well-known formula for the Betti numbers of a toric variety, and some related facts about certain cell-compatible sheaves on non-simple polytopes. 

Take any polytope $P$ of dimension $n$ and define a particular sequence of subsets $A_m$ of $\mathring{\text{Sk}}_{n-1}(P)$ by letting $a_1,\dots,a_{|\mathring{\text{Sk}}_{n-1}(P)|}$ be an enumeration of $\mathring{\text{Sk}}_{n-1}(P)$ such that $A_m := \bigcup_{i=1}^m \overline{a_m}$ is contractible for all $m<|\mathring{\text{Sk}}_{n-1}(P)|$.  Define a subsheaf $\calS^i_P(A_m)$ of $\bigwedge\nolimits^i\underline{\QQ}_P^n$ by the following restriction of the stalks:
$$\left(\calS^i_P(A_m)\right)_x = \bigwedge\nolimits^i\left(\bigcap_{\substack{ \overline{a}_j\ni x \\ 0\leq j\leq m }}H_{a_j}\right),$$
where $V$ is the ambient $n$-dimensional vector space and $H_{a_j} \subset V$ is the central hyperplane associated to the $n-1$ cell $a_j$. 

The following proposition is a bit technical and its use is essentially to prove Corollary \ref{j>ivanishing}.

\begin{proposition}\label{t}
Let $P$ be a polytope of dimension $n$ and $\{A_m\}$ a sequence of subsets of $\mathring{\text{Sk}}_{n-1}(P)$ as defined above. Then:
\begin{enumerate}
\item  $\calS^i_{P}(\varnothing) \cong \bigwedge\nolimits^i\underline{\QQ}_{P}^n$ and if $P$ is the polytope of a projective toric variety $X$ then $\calS^i_{P}(\mathring{\text{Sk}}_{n-1}) \cong R^if_*\underline{\QQ}_X$.
\item If $P$ is a simple polytope there is a short exact sequence of sheaves 
$$ 0\to\calS^i_{P}(A_m) \to \calS^i_{P}(A_{m-1}) \to \calS^{i-1}_{a_m}(A_{m-1}\cap \{a_m\})\to 0.$$
\item $H^j(\calS^i_P(A_m)) = 0$ for all $i<j$. If $P$ is a simple polytope then in addition $H^j(\calS^i_P(A_m)) = 0$ for all $j>i$.
\end{enumerate}
\end{proposition}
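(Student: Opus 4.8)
\emph{Proof plan.} The plan is to take the three parts in order: (1) and (2) are computations of stalks, and (3) then follows from a single induction built on (2). For (1), all of the sheaves involved are subsheaves of the one constant sheaf $\bigwedge\nolimits^i\underline{\QQ}_P^n$ on the connected space $P$, so it suffices to identify stalks. When $A_0=\varnothing$ the intersection defining the stalk is over the empty set, hence is the whole ambient space $V$, so $\calS^i_P(\varnothing)=\bigwedge\nolimits^i\underline{\QQ}_P^n$; and for $x$ in the relative interior of a $k$-face $\alpha$ of the simple polytope $P$, $\alpha$ is the intersection of exactly the $n-k$ facets through it, so its linearization $\langle d^\alpha_1,\dots,d^\alpha_k\rangle$ equals $\bigcap_{\overline{a}_j\ni x}H_{a_j}$, and comparing with the stalk formula \eqref{stalkdescr} gives $\calS^i_P(\mathring{\text{Sk}}_{n-1})\cong R^if_*\underline{\QQ}_X$.

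For (2), the map $\calS^i_P(A_m)\to\calS^i_P(A_{m-1})$ is the inclusion of subsheaves of $\bigwedge\nolimits^i\underline{\QQ}_P^n$, which exists because enlarging the index set of the intersection only makes it smaller. Its cokernel $Q$ vanishes away from the closed set $\overline{a_m}$, so it is the extension by zero of a sheaf on $\overline{a_m}$. For $x\in\overline{a_m}$, writing $W_x=\bigcap_{j\le m-1,\ \overline{a}_j\ni x}H_{a_j}$, we have $Q_x=\bigwedge\nolimits^iW_x\big/\bigwedge\nolimits^i(W_x\cap H_{a_m})$. Simplicity of $P$ puts the hyperplanes of the facets through $x$ in general position, so $W_x\cap H_{a_m}$ is a hyperplane in $W_x$; and for a hyperplane $W'\subset W$, contraction against a covector $\xi$ with $W\cap\ker\xi=W'$ gives a natural isomorphism $\bigwedge\nolimits^iW\big/\bigwedge\nolimits^iW'\xrightarrow{\ \sim\ }\bigwedge\nolimits^{i-1}W'$. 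Fixing $\xi$ once to be any nonzero covector vanishing on $H_{a_m}$ makes these isomorphisms compatible with the restriction maps, and since the facets of the polytope $a_m$ are the faces $\overline{a}_j\cap\overline{a}_m$, with central hyperplanes $H_{a_j}\cap H_{a_m}$ inside $H_{a_m}$, this identifies $Q$ with the extension by zero of $\calS^{i-1}_{a_m}(A_{m-1}\cap a_m)$, which gives the sequence.

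For (3) I would induct on the pair $(\dim P,m)$ ordered lexicographically. The base case $m=0$ is part (1): $\bigwedge\nolimits^i\underline{\QQ}_P^n$ is constant on the contractible polytope $P$, so its cohomology is concentrated in degree $0$, which in particular gives the claimed vanishing. For $m\ge 1$, apply $H^\bullet(P,-)$ to the short exact sequence of (2):
$$\cdots\to H^{j-1}\big(\calS^{i-1}_{a_m}(A_{m-1}\cap a_m)\big)\to H^{j}\big(\calS^i_P(A_m)\big)\to H^{j}\big(\calS^i_P(A_{m-1})\big)\to\cdots$$
For $j>i$ the right-hand group vanishes by the inductive hypothesis in $m$ and the left-hand one by the inductive hypothesis in $\dim P$ (here $\dim a_m=n-1$ and $j-1>i-1$), so $H^{j}(\calS^i_P(A_m))=0$. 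The vanishing asserted for a general, not necessarily simple, polytope is obtained by the analogous but more elementary induction in which the open--closed sequence of Lemma \ref{openclosed} for $\overline{a_m}\hookrightarrow P$ replaces the sequence of (2).

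The step I expect to be the main obstacle is the sheaf-level (as opposed to stalk-level) identification of the cokernel in (2): one has to check that the contraction maps against $\xi$ really glue to an isomorphism of sheaves on $\overline{a_m}$, and --- for the induction of (3) to apply to $\calS^{i-1}_{a_m}(A_{m-1}\cap a_m)$ --- that $A_{m-1}\cap a_m$ is again a union of facets of the simple polytope $a_m$ that can be enumerated with contractible partial unions. With those points in hand the remainder is routine exterior-algebra and long-exact-sequence bookkeeping.
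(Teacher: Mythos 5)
Your proposal is correct and follows essentially the same route as the paper: stalkwise identifications for (1), the short exact sequence of (2) realized by a degree-lowering "remove the $H_{a_m}$-direction" map (the paper builds the surjection stalkwise from chosen bases, you equivalently identify the cokernel of the inclusion via contraction against a fixed covector vanishing on $H_{a_m}$, which in fact handles the compatibility issue more cleanly), and the same double induction on dimension and on $m$ for the vanishing in (3), with the Lemma \ref{openclosed}-style argument for the non-simple statement. The point you flag about $A_{m-1}\cap a_m$ inheriting an admissible enumeration of the facets of $a_m$ is likewise left implicit in the paper, so your plan matches its proof in both substance and level of detail.
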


\begin{proof}
The first part of (1) follows directly from the definition, as does the second since we know that $R^if_*\underline{\QQ}_X$ is the subsheaf of $\bigwedge\nolimits^i\underline{\QQ}_P^{n}$ with stalks $$(R^if_*\underline{\QQ}_X)_x = \bigwedge\nolimits^i\left( \bigcap_{\substack{a\in\mathring{\text{Sk}}_{n-1}(P) \\ \overline{a}\ni x}}H_a\right)$$
(this is simply another way of writing the description of the stalks that appears in the proof of Proposition \ref{cc}).

For (2) consider the surjective morphism of sheaves 
$$q:\calS^i_{P}(A_{m-1}) \to \calS^{i-1}_{a_m}(A_{m-1}\cap\{a_m\})$$
defined on stalks in the following way: First, if $x\not\in\overline{a_m}$ then $q_x$ is the zero map and if $x\in\overline{a_m}$ then
$$q_x:\bigwedge\nolimits^i\left(\bigcap_{\overline{a_k}\ni x}H_{a_k}\right)\to \bigwedge\nolimits^i\left(\bigcap_{\overline{a_k}\ni x}H_{a_k}\cap H_{a_m}\right)$$
where $\bigcap_{\overline{a_k}\ni x}H_{a_k}$ is an $l$-dimensional vector space with a chosen basis $\{b_1,\dots,b_l\}$ and so $\bigcap_{\overline{a_k}\ni x}H_{a_k}\cap H_{a_m}$ is $(l-1)$-dimensional with basis $\{b_1,\dots,b_{l-1}\}$. Then $q_x$ is defined as taking $b_{r_1}\wedge\dots\wedge b_{r_i}$ to $0$ if $b_l$ is not among the $b_r$, and to $b_{r_1}\wedge\dots\wedge \hat{b_{l}}\wedge\dots\wedge b_{r_i}$ otherwise.

The kernel of $q$ is a subsheaf of $\bigwedge\nolimits^i\underline{\QQ}_{P}^{n}$ (since it is a subsheaf of $\calS^i_{P}(A_{m-1})$) whose stalks match those of $\calS^i_{P}(A_m)$, hence it is $\calS^i_{P}(A_m)$ itself.

(3) follows in the same way as the proof of Lemma \ref{j<ivanishing}. For the second statement we proceed by induction on the dimension $n$ of $P$. For the $1$-dimensional simple polytope $P$, we can see
$$\calS^0_P(A)\cong R^0f_*\underline{\QQ}_{X}\text{ for all } A\subseteq \mathring{\text{Sk}}_0(P)$$
and
$$\calS^1_P(\varnothing) \cong \underline{\QQ}_{P},$$
$$\calS^1_P({\{a_i\}})\cong j_!\underline{\QQ}_{P\setminus\{a_i\}},$$
$$\calS^1_P({\{a_1,a_2\}})\cong j_!\underline{\QQ}_{P\setminus\{a_1, a_2\}} \cong R^1f_*\underline{\QQ}_{X}$$
These are fairly innocent cell-compatible sheaves with the property that $H^j(\calS^i_P(A)) = 0$ for all $j>i$.  Assuming that this holds for a dimension $n$ polytope, consider the long exact sequence in cohomology arising from the short exact sequence of (2) with $a_m = P$ and $P_\bullet$ equal to some $n+1$-dimensional polytope with $P$ as a face. Note that it is here where we use simplicity of the polytope; the statement of (2) does not hold for $P$ non-simple. This gives the desired result for $n+1$, since we have in addition that for any $n$, $H^j(P,\calS_P^i(\varnothing)) = H^j(P,\bigwedge\nolimits^i\underline{\QQ}^n_P) =0$ for all $j>i$.

\end{proof}

\begin{corollary}\label{j>ivanishing}
Let $X$ be a smooth projective toric variety with polytope $P$. Then $H^j(P,R^if_*\underline{\QQ}_X) = 0$ for all $j > i$.
\end{corollary}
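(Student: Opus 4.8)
The plan is to obtain the statement directly from Proposition \ref{t}. The one input from the geometry of toric varieties that I would use is that the polytope $P$ of a smooth projective toric variety is simple: smoothness forces exactly $n$ facets to meet at every vertex (this is the combinatorial shadow of the Delzant condition recalled at the start of Section \ref{2}). This is exactly what is needed to access the stronger, $j>i$ half of Proposition \ref{t}(3), which fails for general polytopes.

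Next I would fix an enumeration $a_1,\dots,a_{|\mathring{\text{Sk}}_{n-1}(P)|}$ of the $(n-1)$-cells of $P$ of the kind used to define the sheaves $\calS^i_P(A_m)$ — so that each proper partial union $A_m = \bigcup_{j\le m}\overline{a_j}$ is contractible; such an enumeration exists, for instance from a shelling of $\partial P$. Setting $m=|\mathring{\text{Sk}}_{n-1}(P)|$ gives $A_m=\mathring{\text{Sk}}_{n-1}(P)$, and Proposition \ref{t}(1) identifies the corresponding sheaf as $\calS^i_P(\mathring{\text{Sk}}_{n-1})\cong R^if_*\underline{\QQ}_X$.

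Finally I would invoke the second statement of Proposition \ref{t}(3): since $P$ is simple, $H^j(P,\calS^i_P(A_m))=0$ for all $j>i$ and every $m$, in particular for $m=|\mathring{\text{Sk}}_{n-1}(P)|$. Feeding in the identification above yields $H^j(P,R^if_*\underline{\QQ}_X)=0$ for $j>i$, which is the claim.

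There is no real obstacle here: the corollary is essentially a repackaging of Proposition \ref{t}, with all of the work already spent on the short exact sequence of part (2) and the induction on $\dim P$ behind part (3). The only point worth flagging in the write-up is that simplicity of $P$ is indispensable — Proposition \ref{t}(2), and hence the $j>i$ vanishing, genuinely fails for non-simple $P$ — which is the reason the corollary is stated only for smooth projective toric varieties, and is consistent with the remark (following Lemma \ref{quotient}) that singular toric varieties with the same polytope can have different Betti numbers.
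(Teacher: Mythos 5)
Your proposal is correct and takes essentially the same route as the paper, whose entire proof is to combine parts (1) and (3) of Proposition \ref{t}. Your additional remarks --- that smoothness forces $P$ to be simple (needed for the $j>i$ half of part (3)) and that a suitable enumeration of the facets exists, e.g.\ via a shelling of $\partial P$ --- merely make explicit what the paper leaves tacit.
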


\begin{proof}
Combine (1) and (3) of Proposition \ref{t}.
\end{proof}

\begin{corollary}[\cite{D78}]\label{toricbetti} Let $X$ be a smooth projective toric variety of dimension $n$ with polytope $P$. Then the odd-dimensional cohomology of $X$ vanishes and 
\begin{equation*}
h^{2i}(X,\underline{\QQ}_X) = h^i(P,R^if_*\underline{\QQ}_X) = \sum_{j=i}^n(-1)^{i-j}\binom{j}{i}|\mathring{\text{Sk}}_{j}(P)|.
\end{equation*}
\end{corollary}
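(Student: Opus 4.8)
The plan is to combine the decomposition of Corollary \ref{cohomcalc} (applied to the toric variety $X$ itself, viewed as a broken toric variety with one component) with the two vanishing statements for the sheaves $R^if_*\underline{\QQ}_X$ on $P$: Lemma \ref{j<ivanishing} gives $H^j(P,R^if_*\underline{\QQ}_X)=0$ for $j<i$, and Corollary \ref{j>ivanishing} gives $H^j(P,R^if_*\underline{\QQ}_X)=0$ for $j>i$. Together these force $H^j(P,R^if_*\underline{\QQ}_X)$ to vanish unless $j=i$, so that $H^k(X,\underline{\QQ}_X)=\bigoplus_{i+j=k}H^j(P,R^if_*\underline{\QQ}_X)=H^{k/2}(P,R^{k/2}f_*\underline{\QQ}_X)$ when $k$ is even, and $H^k(X,\underline{\QQ}_X)=0$ when $k$ is odd. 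This immediately yields the vanishing of odd cohomology and the first equality $h^{2i}(X)=h^i(P,R^if_*\underline{\QQ}_X)$.

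For the closed formula, the remaining task is to compute the single nonvanishing cohomology group $h^i(P,R^if_*\underline{\QQ}_X)$. Here I would use the filtration of $R^if_*\underline{\QQ}_X$ by skeleta, exactly as in Lemma \ref{openclosed}: the short exact sequences
$$0\to {\iota_k}_!{\iota_k}^{-1}R^if_*\underline{\QQ}_X\to {\kappa_k}_!\kappa_k^{-1}R^if_*\underline{\QQ}_X \to (\kappa_{k-1})_!(\kappa_{k-1})^{-1}R^if_*\underline{\QQ}_X\to 0$$
together with the identification ${\iota_k}_!{\iota_k}^{-1}R^if_*\underline{\QQ}_X\cong {\iota_k}_!\underline{\QQ}_{\mathring{\text{Sk}}_k(P)}^{\binom{k}{i}}$ (since $\bigwedge^i\QQ^k$ has dimension $\binom{k}{i}$), whose cohomology is $\QQ^{\binom{k}{i}|\mathring{\text{Sk}}_k(P)|}$ concentrated in degree $k$. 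Because all the relevant cohomology is concentrated in a single degree (the sheaf ${\iota_k}_!\underline{\QQ}^{\binom{k}{i}}$ contributes only in degree $k\geq i$, and by induction the $\kappa_{k-1}$ term contributes only in degree $i$), the long exact sequences split, and one obtains by descending induction on $k$ from $k=n$ down to $k=i$ that
$$h^i(P,R^if_*\underline{\QQ}_X) = \sum_{j=i}^n (-1)^{j-i}\binom{j}{i}|\mathring{\text{Sk}}_j(P)|,$$
the alternating sign arising because the $\mathring{\text{Sk}}_j$ contributions for $j>i$ enter through connecting maps in the long exact sequence. Equivalently, one can take the Euler characteristic $\chi(P,R^if_*\underline{\QQ}_X)$ of the whole filtration: since the cohomology is concentrated in degree $i$, $h^i=(-1)^i\chi$, and $\chi$ is the alternating sum of the $\chi$ of the associated graded pieces ${\iota_k}_!\underline{\QQ}^{\binom{k}{i}}_{\mathring{\text{Sk}}_k(P)}$, each of which has Euler characteristic $(-1)^k\binom{k}{i}|\mathring{\text{Sk}}_k(P)|$.

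The main obstacle is bookkeeping rather than conceptual: one must be careful that the long exact sequences genuinely degenerate into short exact sequences of vector spaces in a single degree at each stage of the induction, which relies crucially on having \emph{both} vanishing bounds ($j<i$ from Lemma \ref{j<ivanishing} and $j>i$ from Corollary \ref{j>ivanishing}, the latter being where simplicity of $P$ is used via Proposition \ref{t}), and on the degree-$k$ concentration of $H^*({\iota_k}_!\underline{\QQ}_{\mathring{\text{Sk}}_k(P)})$, which holds because $\mathring{\text{Sk}}_k(P)$ is a disjoint union of open $k$-cells each contractible with compactly supported cohomology in degree $k$ only. Once degeneration is in hand, the Euler-characteristic computation is routine, and it recovers Danilov's formula.
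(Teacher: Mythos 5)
Your proposal is correct and takes essentially the same route as the paper: the reduction $h^{2i}(X)=h^i(P,R^if_*\underline{\QQ}_X)$ and the odd-degree vanishing come from Theorem \ref{ff} together with Lemma \ref{j<ivanishing} and Corollary \ref{j>ivanishing}, and the closed formula is extracted from the skeletal filtration of Lemma \ref{openclosed} with graded pieces ${\iota_k}_!\underline{\QQ}^{\binom{k}{i}}_{\mathring{\text{Sk}}_k(P)}$, exactly as in the paper. Your Euler-characteristic formulation (using concentration in degree $i$ to write $h^i=(-1)^i\chi$ and additivity of $\chi$ along the filtration) is a cleaner packaging of the long-exact-sequence bookkeeping the paper carries out explicitly, and it is preferable to your looser claim that the sequences ``split,'' which they do not literally do.
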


\begin{proof}
Assume that $i>0$, since the $i=0$ case is trivial. Denote $\calS^i_P(\mathring{\text{Sk}}_{n-1}(P_\bullet))$ by $\calF$ and consider the long exact sequence associated to \eqref{restrictiontoskeletongeneral}, which  tells us that 
\begin{equation}\label{easyiso}
H^l(P,(\kappa_k)_!(\kappa_k)^{-1}\calF) = H^l(P,(\kappa_{k-1})_!(\kappa_{k-1})^{-1}\calF)
\end{equation} for all $l\neq k,k-1$, since $H^l(P,(\iota_k)_!\calF|_{\mathring{\text{Sk}}_{k}(P)}) = H^l(P,(\iota_{l+1})_!\bigwedge\nolimits^i\underline{\QQ}^{n}_{\mathring{\text{Sk}}_{k}(P)} )$ is only nonzero for $l=k$. Since $H^l(P,(\kappa_0)_!(\kappa_0)^{-1}\calF)=0$ for all $l$, we find
$$H^l(P,(\kappa_k)_!(\kappa_k)^{-1}\calF) = 0$$ for all $l>k$. On the other hand, setting $k=n$ in \eqref{easyiso} yields 
\begin{equation*}
H^l(P,\calF) = H^l(P,(\kappa_{n-1})_!(\kappa_{n-1})^{-1}\calF)
\end{equation*} for all $l\neq n,n-1$, and so in particular by Corollary \ref{j>ivanishing} we have $$H^l(P,(\kappa_{n-1})_!(\kappa_{n-1})^{-1}\calF)=0$$ for all $l\neq n,n-1,i$. We can again use \eqref{easyiso} to say that 
\begin{equation*}
H^l(P,(\kappa_k)_!(\overline{\iota }_k)^{-1}\calF) =0
\end{equation*}
for all $i\neq l<k$ (as well as for all $k<i$, as in the proof of Lemma \ref{openclosed}).  These vanishings together imply that
\begin{equation}\label{blub}
h^l(P,(\kappa_{l})_!(\kappa_{l})^{-1}\calF) = h^l(P,(\iota_l)_!\bigwedge\nolimits^i\underline{\QQ}^{n}_{\mathring{\text{Sk}}_{k}(P)} ) -h^{l-1}(P,(\kappa_{l-1})_!(\kappa_{l-1})^{-1}\calF)
\end{equation}
 for $l\leq i$, and
\begin{equation}\label{blab}
h^l(P,(\kappa_{l})_!(\kappa_{l})^{-1}\calF) = h^{l+1}(P,(\iota_{l+1})_!\bigwedge\nolimits^i\underline{\QQ}^{i+1}_{\mathring{\text{Sk}}_{k}(P)} ) -h^{l+1}(P,(\kappa_{l+1})_!(\kappa_{l+1})^{-1}\calF)
\end{equation}
for $l>i$.

Of course, what we are actually interested in $H^i(P,\calF)$, which is isomorphic to $H^i(P,(\kappa_{i+1})_!(\kappa_{i+1})^{-1}\calF)$ by \eqref{easyiso}. The long exact sequence associated to \eqref{restrictiontoskeletongeneral} for $k=i+1$ tells us that 
\begin{align}\label{center}
h^i(P,\calF) &= h^i(P,(\kappa_{i})_!(\kappa_{i})^{-1}\calF) -h^{i+1}(P,(\iota_{i+1})_!\bigwedge\nolimits^i\underline{\QQ}^{i+1}_{\mathring{\text{Sk}}_{i+1}(P_\bullet)})+\\
&\qquad+ h^{i+1}(P,(\kappa_{i+1})_!(\kappa_{i+1})^{-1}\calF).\nonumber
\end{align}

Repeatedly applying Equations \eqref{blub} and \eqref{blab} calculates
\begin{align*}
h^i(P,(\kappa_{i})_!(\kappa_{i})^{-1}\calF) &= \sum_{j=0}^i(-1)^{i-j}h^{j}(P,(\iota_{j})_!\bigwedge\nolimits^i\underline{\QQ}^j_{\mathring{\text{Sk}}_{j}(P)})\\
&=\sum_{j=0}^i(-1)^{i-j}\binom{j}{i}|\mathring{\text{Sk}}_{j}(P)|\\
&=|\mathring{\text{Sk}}_{i}(P)|
\end{align*}
and
\begin{align*}
h^{i+1}(P,(\kappa_{i+1})_!(\kappa_{i+1})^{-1}\calF) &= \sum_{j=i+2}^n(-1)^{i-j}h^{j}(P,(\iota_{j})_!\bigwedge\nolimits^i\underline{\QQ}^j_{\mathring{\text{Sk}}_{j}(P)})\\
&= \sum_{j=i+2}^n(-1)^{i-j}\binom{j}{i}|\mathring{\text{Sk}}_{j}(P)|
\end{align*}
so that \eqref{center} simplifies to 
\begin{align*}
h^i(P,\calF) &=
|\mathring{\text{Sk}}_{i}(P)| - \binom{i+1}{i}|\mathring{\text{Sk}}_{i+1}(P)|+\\
&\qquad+ \sum_{j=i+2}^n(-1)^{i-j}\binom{j}{i}|\mathring{\text{Sk}}_{j}(P)| \\
&= \sum_{j=i}^n(-1)^{i-j}\binom{j}{i}|\mathring{\text{Sk}}_{j}(P)|
\end{align*}

This is an expression for the $2i$-th Betti number of $X$ thanks to Theorem \ref{ff} and the vanishings of Lemma \ref{j<ivanishing} and Corollary \ref{j>ivanishing}.
\end{proof}

\section{Broken Toric Varieties with Skeletal Polytope Complexes}\label{6}

In this section we describe the cohomology of a broad class of broken toric varieties, namely those $X$ whose polytope complexes comprise the $n$-skeleton of a higher dimensional polytope $P'$. The idea is that the sheaves $R^if_*\underline{\QQ}_X$ which come into play here are subsheaves of the cell-compatible sheaves $\mathcal{T}^i_{P'}(\mathring{\text{Sk}}_{\text{dim}(P')-1}(P'))$ (which we understand by Section \ref{5}), and we can use this relationship to extract information about the former.

\begin{definition}
We say that a polytope complex $P_\bullet$ of dimension $n$ is \emph{skeletal} if there exists a polytope $P'$ such that $P_\bullet = \text{Sk}_n(P')$.
\end{definition}

To get a feel for this definition, note that the polytope complex in Example \ref{ex1} is skeletal while the one in Example \ref{ex2} is not. 

\begin{proposition}
If $X$ is a broken toric variety of dimension $n$ with skeletal polytope complex $P_\bullet$ and $P'$ is a polytope of dimension $n'$ such that $P_\bullet = \text{Sk}_n(P')$, then
\begin{equation*}
h^j(P_\bullet,R^if_*\underline{\QQ}_{P_\bullet}) = 
\begin{cases}
h^j(P',R^if_*\underline{\QQ}_{X(P')}) , &i \leq j < n\\
\sum_{l=n}^{n'}(-1)^{n+l}h^l(P',R^if_*\underline{\QQ}_{X(P')})   &\\
 \qquad + \sum_{l=n+1}^{n'}(-1)^{n+l+1}\binom{l}{i}|\mathring{\text{Sk}}_l(P')|, & i\leq j=n \\
0, & \text{otherwise}
\end{cases}
\end{equation*}
In particular, if $P'$ is polytope, then
\begin{equation*}
h^j(P_\bullet,R^if_*\underline{\QQ}_{P_\bullet}) = 
\begin{cases}
h^i(P',R^if_*\underline{\QQ}_{X(P')}) , &i = j < n\\
\sum_{l=n+1}^{n'}(-1)^{n+l+1}\binom{l}{i}|\mathring{\text{Sk}}_l(P')|  , & i<j=n \\
h^n(P', R^nf_*\underline{\QQ}_{X(P')}) + \sum_{l=n+1}^{n'}(-1)^{n+l+1}\binom{l}{i}|\mathring{\text{Sk}}_l(P')|   , & i=j=n \\
0, & \text{otherwise}
\end{cases}
\end{equation*}
\end{proposition}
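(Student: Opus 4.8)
The plan is to deduce everything from a single skeleton-filtration computation on the polytope $P'$, much as in the proof of Corollary~\ref{toricbetti}. Write $n'=\dim P'$ and, for $0\le k\le n'$, let $\lambda_k\colon\text{Sk}_k(P')\hookrightarrow P'$ be the inclusion of the $k$-skeleton. The first step is to identify the sheaf in question: comparing stalks cell by cell — each is $\bigwedge\nolimits^i$ of the linear span parallel to the cell containing the point, using that the facet normals of $P'$ through a $k$-face span the $(n'-k)$-dimensional orthogonal complement of that span, exactly as in the proof of Proposition~\ref{cc} — one gets that, as a cell-compatible sheaf on $P_\bullet=\text{Sk}_n(P')$,
$$R^if_*\underline{\QQ}_{P_\bullet}\;\cong\;(\lambda_n)^{-1}\calF,\qquad \calF:=R^if_*\underline{\QQ}_{X(P')}=\calS^i_{P'}\big(\mathring{\text{Sk}}_{n'-1}(P')\big),$$
where $\calF$ is the sheaf on $P'$ studied in Section~\ref{5}. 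Setting $\calF_k:=(\lambda_k)_!(\lambda_k)^{-1}\calF$, a sheaf on $P'$ with $\calF_{n'}=\calF$, this reduces the problem to computing $H^j(P',\calF_n)$ in terms of $H^j(P',\calF_{n'})$.

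The only machinery needed is the skeleton short exact sequence of Lemma~\ref{openclosed}: extending \eqref{restrictiontoskeletongeneral} by zero to $P'$, for each $k$ with $n<k\le n'$ one has
$$0\to\mathcal{I}_k\to\calF_k\to\calF_{k-1}\to 0,\qquad \mathcal{I}_k:=(\mu_k)_!\bigwedge\nolimits^i\underline{\QQ}^k_{\mathring{\text{Sk}}_k(P')},$$
with $\mu_k\colon\mathring{\text{Sk}}_k(P')\hookrightarrow P'$, and by the computation in Lemma~\ref{openclosed} (compact supports on the open $k$-cells) $H^\bullet(P',\mathcal{I}_k)$ is concentrated in degree $k$, of dimension $c_k:=\binom{k}{i}|\mathring{\text{Sk}}_k(P')|$. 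I also record two vanishing statements: $H^j(P',\calF_k)=0$ for $j>k$ (the sheaf is supported on a $k$-dimensional complex), and $H^j(P',\calF_k)=0$ for $j<i$ (run the induction from the proof of Lemma~\ref{j<ivanishing}, using $\calF_k=0$ for $k<i$). Abbreviate $a_l:=h^l(P',R^if_*\underline{\QQ}_{X(P')})=h^l(P',\calF)$.

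The remainder is bookkeeping with the long exact sequences of these triangles, descending from $\calF_{n'}=\calF$ to $\calF_n$. For $i\le j<n$ every intervening $\mathcal{I}_k$ ($n<k\le n'$) has cohomology only in degree $k\ge n+1>j$, so $H^j(\calF_k)\cong H^j(\calF_{k-1})$ at each step, and hence $h^j(P',\calF_n)=a_j$. For $j=n$ one first checks, by the same argument, that $h^p(P',\calF_{p+1})=a_p$ for $n\le p\le n'-1$; then the triangle $0\to\mathcal{I}_{p+1}\to\calF_{p+1}\to\calF_p\to 0$, using $H^p(\mathcal{I}_{p+1})=0$ and $H^{p+1}(\calF_p)=0$, collapses to a four-term exact sequence giving the recursion $b_p=a_p+c_{p+1}-b_{p+1}$ for $b_p:=h^p(P',\calF_p)$, with $b_{n'}=a_{n'}$. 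Unrolling,
$$h^n(P_\bullet,R^if_*\underline{\QQ}_{P_\bullet})=b_n=\sum_{l=n}^{n'}(-1)^{n+l}a_l+\sum_{l=n+1}^{n'}(-1)^{n+l+1}c_l,$$
which is exactly the claimed value, and the two vanishing statements above account for all remaining $j$ (that is, $j<i$ and $j>n$). This establishes the first displayed formula.

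For the specialization (the "in particular" clause, where $P'$ is a simple polytope), Lemma~\ref{j<ivanishing} together with Corollary~\ref{j>ivanishing} forces $a_l=0$ for $l\ne i$; substituting this into the formulas just obtained collapses them to the second display (the sum $\sum_l(-1)^{n+l}a_l$ surviving only as $a_n$, and only when $i=n$). The genuinely delicate point is the stalk-by-stalk identification $R^if_*\underline{\QQ}_{P_\bullet}\cong(\lambda_n)^{-1}\calF$ of the first paragraph — matching stalks and gluing data, being careful about ambient dimensions, and noting that when the $n$-faces of $P'$ are simple this sheaf is literally the $i$-th derived pushforward for a broken toric variety over $\text{Sk}_n(P')$; once that is in place, everything else is the skeleton-filtration machinery already developed in Sections~\ref{2}--\ref{5}.
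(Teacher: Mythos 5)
Your proof is correct and follows essentially the same route as the paper: it descends skeleton by skeleton from $P'$ to $\text{Sk}_n(P')$ via the short exact sequences of Lemma \ref{openclosed}, invoking Lemma \ref{j<ivanishing} (and dimension reasons) for the general formula and Corollary \ref{j>ivanishing} for the simple case, exactly as in the paper's (much terser) argument. Your explicit identification $R^if_*\underline{\QQ}_X \cong (\lambda_n)^{-1}\calS^i_{P'}(\mathring{\text{Sk}}_{n'-1}(P'))$ and the unrolled recursion $b_p = a_p + c_{p+1} - b_{p+1}$ simply make precise the bookkeeping the paper leaves implicit.
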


\begin{proof}
This result follows from inductively applying Lemma \ref{openclosed} first to $\calF=R^if_*\underline{\QQ}_{Q}$ for $k=\text{dim}(Q)$, then to $\calF = R^if_*\underline{\QQ}_{\text{Sk}_{\text{dim}(Q)-1}(Q)}$ for $k=\text{dim}(Q)-1$, etc. In the first step we find the desired formula by the vanishings given by Lemma \ref{j<ivanishing}. For $P'$ polytope, we apply  Lemma \ref{j>ivanishing} to get the second part of the proposition.
\end{proof}

The second part of this proposition allows for an interested party to write out an explicit formula for the Betti numbers of any broken toric variety whose polytope complex is the $n$-skeleton of a higher dimensional simple polytope. We will content ourselves with writing this out for the case $\text{dim}(P') = n+1$.

\begin{corollary}
If $X$ is a broken toric variety of dimension $n$ with skeletal polytope complex $P_\bullet = \text{Sk}_n(P')$ for $P'$ a simple polytope of dimension $n+1$, then
\begin{equation*}
h^j(P_\bullet,R^if_*\underline{\QQ}_{P_\bullet}) = 
\begin{cases}
h^j(P',R^if_*\underline{\QQ}_{X(P')}) , &i = j < n\\
\binom{n+1}{i}, & i < j = n\\
{|\mathring{\text{Sk}}_n(P_\bullet)|}, & i=j=n \\
0, & \text{otherwise}
\end{cases}
\end{equation*}
and
\begin{equation*}
h^j(X,\underline{\QQ}_X) = 
\begin{cases}
(-1)^{n+1-2i}\binom{n+1}{i}+\sum_{k=i}^n(-1)^{i-k}\binom{k}{i}|\mathring{\text{Sk}}_{k}(P)| , &j=2i< n\\
0, & j=2i+1<n\\
\binom{n+1}{2i+1-n}, & n \leq j=2i+1\leq 2n \\
\binom{n+1}{2i-n}+(-1)^{n+1-2i}\binom{n+1}{i}+\sum_{k=i}^n(-1)^{i-k}\binom{k}{i}|\mathring{\text{Sk}}_{k}(P)|, & n\leq j=2i\leq 2n
\end{cases}
\end{equation*}
\end{corollary}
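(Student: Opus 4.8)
The plan is to specialize the preceding Proposition to $n' = \dim(P') = n+1$ and then assemble the Betti numbers of $X$ from Corollary \ref{cohomcalc}. First I would note that when $P'$ has dimension $n+1$, the set $\mathring{\text{Sk}}_{n+1}(P')$ consists of the single open cell $\mathring{P'}$, so $|\mathring{\text{Sk}}_{n+1}(P')| = 1$ and every sum $\sum_{l=n+1}^{n'}$ appearing in the second part of the Proposition collapses to its $l = n+1$ term. This at once yields $\binom{n+1}{i}$ in the slot $i < j = n$ and $h^n(P',R^nf_*\underline{\QQ}_{X(P')}) + (n+1)$ in the slot $i = j = n$. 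To rewrite the latter as $|\mathring{\text{Sk}}_n(P_\bullet)|$, apply Corollary \ref{toricbetti} to the $(n+1)$-dimensional smooth projective toric variety $X(P')$ with $i = n$:
$$h^n(P',R^nf_*\underline{\QQ}_{X(P')}) = \sum_{l=n}^{n+1}(-1)^{n-l}\binom{l}{n}|\mathring{\text{Sk}}_l(P')| = |\mathring{\text{Sk}}_n(P')| - (n+1),$$
and since the $n$-cells of $P_\bullet = \text{Sk}_n(P')$ are precisely those of $P'$, adding $n+1$ gives $|\mathring{\text{Sk}}_n(P_\bullet)|$. This establishes the first displayed formula.

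For the Betti numbers, Corollary \ref{cohomcalc} gives $h^k(X,\underline{\QQ}_X) = \sum_{i+j=k} h^j(P_\bullet, R^if_*\underline{\QQ}_X)$, into which I would substitute the sheaf-cohomology table just obtained. By Lemma \ref{j<ivanishing} and Corollary \ref{j>ivanishing} (the latter applied to $X(P')$), for each $i < n$ that table is concentrated in cohomological degree $j = i$, while the row $j = n$ can be nonzero for every $0 \leq i \leq n$; in particular only the diagonal $j = i$ (for $i < n$) and the top row $j = n$ contribute, their unique overlap being the corner $i = j = n$. The rest is bookkeeping organized by the parity of $k$ and by whether $k < n$. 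If $k < n$, the row $j = n$ cannot contribute (it would require $i = k - n < 0$), so only $j = i$ survives: for $k$ odd the sum is empty and $h^k(X) = 0$, while for $k = 2i$ one gets $h^i(P',R^if_*\underline{\QQ}_{X(P')}) = h^{2i}(X(P'))$, which Corollary \ref{toricbetti} expands as an alternating sum over $l$; peeling off the $l = n+1$ summand as the $\binom{n+1}{i}$-term and using $|\mathring{\text{Sk}}_l(P')| = |\mathring{\text{Sk}}_l(P_\bullet)|$ for $l \leq n$ gives the stated expression. If $n \leq k \leq 2n$, the row $j = n$ contributes its entry at $i = k - n$, which is $\binom{n+1}{k-n}$ when $k < 2n$ and $|\mathring{\text{Sk}}_n(P_\bullet)|$ when $k = 2n$; adding the diagonal term $h^{k/2}(P',R^{k/2}f_*\underline{\QQ}_{X(P')})$ — present exactly when $k$ is even and $k < 2n$ — produces the two remaining even cases, and for odd $k = 2i+1$ only the row term $\binom{n+1}{2i+1-n}$ survives.

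The step I expect to be the main obstacle is not conceptual but the combinatorial bookkeeping at the boundary values $k = n$ and $k = 2n$: one must ensure the corner $i = j = n$ is counted exactly once, and one must track the sign carefully when the leading summand $(-1)^{i-n-1}\binom{n+1}{i}$ of the alternating sum of Corollary \ref{toricbetti} is split off and rewritten in the form displayed in the statement. I would also want to check that the induction underlying the preceding Proposition runs correctly at its first step when $n' = n+1$, so that no spurious low-degree term creeps in.
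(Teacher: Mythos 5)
Your route is exactly the intended one: the paper offers no separate argument for this corollary, it is meant as the direct specialization of the preceding Proposition to $n'=n+1$ combined with Corollary \ref{toricbetti} (to rewrite $h^n(P',R^nf_*\underline{\QQ}_{X(P')})+(n+1)$ as $|\mathring{\text{Sk}}_n(P_\bullet)|$) and Corollary \ref{cohomcalc}, and every step you carry out is correct, including the bookkeeping of which $(i,j)$ entries feed into which $h^k(X)$.

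On the sign you flagged as the remaining worry: your computation is the right one and the displayed statement is what is off. Splitting the $l=n+1$ term out of $h^i(P',R^if_*\underline{\QQ}_{X(P')})=\sum_{l=i}^{n+1}(-1)^{i-l}\binom{l}{i}|\mathring{\text{Sk}}_l(P')|$ gives the coefficient $(-1)^{i-n-1}=(-1)^{n+1-i}$, whereas the printed exponent $n+1-2i$ yields $(-1)^{n+1}$; the two agree only for even $i$, so the exponent in the statement should be read as $n+1-i$ (a misprint). With that reading your corner check at $j=2n$ also resolves itself: putting $i=n$ in the even-case formula, the terms $\binom{n+1}{2i-n}=\binom{n+1}{n}$ and $(-1)^{n+1-n}\binom{n+1}{n}$ cancel, leaving exactly $|\mathring{\text{Sk}}_n(P_\bullet)|$, consistent with the fact that at $k=2n$ only the corner entry contributes; a quick test with $P'$ the $3$-cube ($n=2$, so $h^2(X)=1+3=4$, $h^4(X)=6$, Euler characteristic $8$) confirms the corrected sign and rules out the literal one.
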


\appendix 

\section{Broken Toric Varieties as Quotients of Polyhedral Products}\label{A}

In this appendix we will describe how broken toric varieties appear in the study of polyhedral products, further cementing them as objects with widespread appeal.

Given a family of based CW pairs $(\underline{X},\underline{A}) := (X_i,A_i)_{i=1}^m$ and a simplicial complex $K$ on $m$ vertices, the polyhedral product is a tool developed in the field of toric topology to produce a subspace $\calZ(K,(\underline{X},\underline{A}))$ of the Cartesian product $X_1\times\dots\times X_m$. To be precise, for $\mathbf{SCpx}$ the category of simplical complexes and $\calC [m]$ the category of $m$-tuples of based CW pairs, it is a functor 
$$\calZ(-,-) : \mathbf{SCpx} \times \calC [m] \to \mathbf{Top}$$
satisfying (following \cite{BBC20})
$$\calZ(K,(\underline{X},\underline{A})) \subseteq X_1\times\dots\times X_m$$
and which is the colimit of a diagram $D$ in the category $CW_*$ of pointed CW pairs, defined for $\sigma\in K$ by
$$W_i = \begin{cases}
X_i, &i\in\sigma \\
A_i, &i\not\in\sigma
\end{cases}$$
and 
$$D(\sigma) = W_1\times \dots \times W_m.$$

If we fix $(\underline{X},\underline{A})$ with $A_i$ the basepoint of $X_i$ for all $i$, one can think of the polyhedral product for different $K$ as interpolating between $X_1\vee\dots\vee X_m$ (when $K$ is $m$ discrete points) and $X_1\times\dots\times X_m$ (when $K$ is the full $(m-1)$-simplex). 

Another relevant example is the case where $(\underline{X},\underline{A}) = ((\PP^2, \PP^1),(\PP^2,\PP^1))$ and $K$ is the simplicial complex consisting of $2$ distinct points. Here we find that
$$\calZ(\{\{1\},\{2\}\},((\PP^2, \PP^1),(\PP^2,\PP^1))) \cong \PP^2\times \PP^1 \cup_{\PP^1\times\PP^1} \PP^1\times \PP^2.$$ The diagonal torus $\Delta = \CC^*$ in $\PP^1 \times \PP^1$ acts in a natural way with Chow quotient $\PP^1$ and further on $\PP^2\times \PP^1$ (and $\PP^1\times\PP^2$) with Chow quotient $\PP^2$.  Thus, taking the quotient of $\calZ(\{\{1\},\{2\}\},((\PP^2, \PP^1),(\PP^2,\PP^1)))$ by this diagonal subgroup yields the broken toric variety of Example \ref{ex2}.

The above example generalizes. For any two (potentially broken) toric varieties $X$ and $X'$ which we wish to glue along a common (potentially broken) toric subvariety $Y\subset X, X'$, take the polyhedral product 
$$\calZ:=\calZ(\{\{1\},\{2\}\}, ((X,Y),(X', Y))) = X\times Y \cup_{Y\times Y} Y\times X$$ and quotient out by the diagonal torus $\Delta\subset Y\times Y$. This quotient is best viewed from the point of view of the corresponding fans (\emph{cf.} \cite{KSZ91}); the quotient of a toric variety $X$ with fan $\calF_X\subset T^{\vee}_\RR$ by a subgroup $H$ of its torus can be described as the toric variety associated to the fan one gets by projecting $\calF_X$ onto the rational subspace of $T^{\vee}_\RR$ defined by $H$. So, the quotient of $Y\times Y$ by $\Delta$ is the toric variety associated to the fan $\calF_Y\times \calF_Y$ projected to the diagonal hyperplane, which is just $\calF_Y$ itself.  In a similar way, the quotient of $X\times Y$ by $\Delta$ is the toric variety associated to the fan $\calF_X \times \calF_Y$ projected to the diagonal hyperplane, which is $\calF_X$.  All together, the quotient of $\calZ$ by $\Delta$ is the desired broken toric variety. 

Iterating the above construction shows that any broken toric variety is the quotient of a polyhedral product.

\bibliographystyle{acm} 
\bibliography{CCSheavesBibliography}

\end{document}